\pgfplotsset{compat=1.14}
\newcommand*{\rom}[1]{\expandafter\@slowromancap\romannumeral #1@}
\newcommand{\gettikzxy}[3]{%
  \tikz@scan@one@point\pgfutil@firstofone#1\relax
  \edef#2{\the\pgf@x}%
  \edef#3{\the\pgf@y}%
}
\newcommand{\sA}{\mathcal A}
\newcommand{\sB}{\mathcal B}
\newcommand{\sC}{\mathcal C}
\newcommand{\sD}{\mathcal D}
\newcommand{\sF}{\mathcal F}
\newcommand{\sI}{\mathcal I}
\newcommand{\sL}{\mathcal L}
\newcommand{\sM}{\mathcal M}
\newcommand{\sN}{\mathcal N}
\newcommand{\sP}{\mathcal P}
\newcommand{\sQ}{\mathcal Q}
\newcommand{\sR}{\mathcal R}
\newcommand{\sS}{\mathcal S}
\newcommand{\sT}{\mathcal T}
\newcommand{\sV}{\mathcal V}
\newcommand{\R}{\mathbb R}
\newcommand{\E}{\mathbb E}
\newcommand{\F}{\mathbb F}
\newcommand{\Q}{\mathbb Q}
\newcommand{\bM}{\mathbb M}
\newcommand{\Prob}{\mathbb P}
\newcommand{\arginf}{\mbox{arginf}}
\newcommand{\argsup}{\mbox{argsup}}
\newcommand{\Leb}{\mbox{Leb}}
\newtheorem{thm}{Theorem}
\newtheorem{prop}{Proposition}
\newtheorem{lem}{Lemma}
\newtheorem{rem}{Remark}
\newtheorem{defn}{Definition}
\begin{document}
\title{The left-curtain martingale coupling in the presence of atoms}
\author{David Hobson\thanks{E-mail: \textit{d.hobson@warwick.ac.uk}}\hspace{3mm}  and  \hspace{3mm}Dominykas Norgilas\thanks{E-mail: \textit{d.norgilas@warwick.ac.uk}}\\
Department of Statistics, University of Warwick\\Coventry CV4 7AL, UK}
\date{\today}
\maketitle

\begin{abstract}
Beiglb\"{o}ck and Juillet~\cite{BeiglbockJuillet:16} introduced the left-curtain martingale coupling of probability measures $\mu$ and $\nu$, and proved that, when the initial law $\mu$ is continuous, it is supported by the graphs of two functions. We extend the later result by constructing the generalised left-curtain martingale coupling and show that for an arbitrary starting law $\mu$ it is characterised by two appropriately defined lower and upper functions.

As an application of this result we derive the model-independent upper bound of an American put option. This extends recent results of Hobson and Norgilas~\cite{HobsonNorgilas:17} on the atom-free case.\\
\indent Keywords: martingale optimal transport, optimal stopping, model-independent pricing, American put.\\
\indent Mathematics Subject Classification: 60G42, 60G40.
\end{abstract}

\section{Introduction}

Given two probability measures $\mu$ and $\nu$ on $\R$ and a payoff function $c:\R\times\R\to\R$, the classical problem of optimal transport is to construct a joint distribution for random variables $Z_1\sim\mu$ and $Z_2\sim\nu$ which minimises $\mathbb{E}[c(Z_1,Z_2)]$. Beiglb\"{o}ck et al.~\cite{BeiglbockHenrylaborderePenkner:13} and Galichon et al.~\cite{GalichonLabordereTouzi:14} introduced a martingale version of the transportation problem and related it to the problem of finding model-independent bounds of exotic derivatives in mathematical finance. Given $\mu$ and $\nu$ in convex order, the basic problem of martingale optimal transport (MOT) is to construct a martingale $M$, with $M_1\sim\mu,M_2\sim\nu$, which minimises $\mathbb{E}[c(M_1,M_2)]$. In this setting a martingale transport or coupling can be identified with a measure $\pi$ on $\R^2$ with univariate marginals $\mu$ and $\nu$, and such that $\int_{y\in\R} \int_{x \in A} (y-x) \pi(dx,dy) =0$ for all Borel sets $A$, and the MOT is to find $\pi$ to minimise $\int \int c(x,y) \pi(dx,dy)$. In the context of mathematical finance this problem was first studied in Hobson and Neuberger~\cite{HobsonNeuberger:12} for the payoff $c(x,y)=-\lvert y-x\lvert$. 
	
	Beiglb\"{o}ck and Juillet~\cite{BeiglbockJuillet:16} introduced the notion of a left-monotone martingale coupling and established that for (arbitrary) fixed marginals $\mu$ and $\nu$ in convex order there exists a unique such coupling (called the left-curtain martingale coupling and denoted by $\pi_{lc}$). The left-curtain martingale coupling may be viewed as a martingale analogue to the monotone Hoeffding-Fr\`{e}chet coupling in classical optimal transport. The authors also proved the optimality of $\pi_{lc}$ for a specific class of payoff functions. Henry-Labord\`{e}re and Touzi~\cite{Touzi:16} extended the results of Beiglb\"{o}ck and Juillet~\cite{BeiglbockJuillet:16} and showed optimality for a wider class of payoff functions. Beiglb\"{o}ck et al.~\cite{BeiglbockHenryLabordereTouzi:17} analysed the left-curtain coupling further and gave a simplified proof of uniqueness under the additional assumption that $\mu$ is continuous. Juillet \cite{Juillet:16} proved that $\pi_{lc}$ is continuous, and thus, for general distributions, it can be approximated by the left-curtain couplings corresponding to `nice' (e.g. finitely supported or continuous) initial and/or target laws. A number of further articles investigate the properties and extensions of $\pi_{lc}$, see Beiglb\"{o}ck et al. \cite{beiglbockGeom:17, BeiglbockHenryLabordereTouzi:17}, Nutz et al. \cite{NutzStebegg:18, NutzStebeggTan:17}.

Beiglb\"{o}ck and Juillet~\cite{BeiglbockJuillet:16} also established a martingale version of the fundamental Brenier's \cite{Brenier:87} result in the classical optimal tansport which states that, for a sufficiently regular initial distribution $\mu$, the optimal transport map is unique and supported by the graph of the gradient of some convex function (a monotonically increasing function in one dimension). In particular, the authors showed that under the assumption that the initial law $\mu$ is continuous, the left-curtain martingale coupling  is supported by the graphs of lower and upper functions $T_d$ and $T_u$, respectively, so that $M_2 \in \{ T_d(M_1),T_u(M_1) \}$. Henry-Labord\`{e}re and Touzi~\cite{Touzi:16} gave an explicit construction of $T_d$ and $T_u$ using differential equations. However, when $\mu$ has an atom at $x$ the element $\pi^x_{lc}(\cdot)$ in the disintegration $\pi_{lc}(dx,dy) = \mu(dx) \pi_{lc}^x(dy)$ becomes a measure with support on non-trivial subsets of $\R$ and not just on a two point set. Then we cannot construct functions $(T_d,T_u)$, unless we allow them to be multi-valued. 

Our goal in this paper is to show how by changing our viewpoint we can again recover the
property that $M_2$ takes values in a two-point set. The idea is to write $M_1 = h(Z)$ for 
a continuous random variable $Z$ (in fact we take $Z \equiv U \sim U(0,1)$) and then to find $f_{Z,h}$ and $g_{Z,h}$ such that $M_2 \in \{f_{Z,h}(Z), g_{Z,h}(Z) \}$. Then, although there is uniqueness at the level of martingale couplings $\pi$, when $\mu$ contains atoms there are many possible choices of $(f_{Z,h},g_{Z,h})$, even for fixed $Z$ and monotonic increasing $h$.   
Nonetheless, we show that amongst this set there is an essentially unique choice $(f_{Z,h},g_{Z,h})$ with a special monotonicity property. 

The motivation for this extension of the left-curtain martingale coupling comes from mathematical finance. The recent study of American put options in Hobson and Norgilas~\cite{HobsonNorgilas:17} highlights the role of the left-curtain martingale coupling in finding the model-independent upper bound on the price of the American put. When $\mu$ is continuous the authors show how the optimal martingale coupling and the optimal stopping time can be obtained from the functions $f=T_d$ and $g=T_u$ which arise in the construction of the left-curtain coupling. In particular, for the optimal model there is a Borel subset of $\R$, say $B$, such that it is optimal to stop at time-1 if $M_1\in B$, and at time-2 otherwise. Moreover, the structure of $f$ and $g$ allows us to identify the cheapest superhedging strategy that supports the price of the American put.

If $\mu$ has atoms then the situation becomes more delicate, essentially because we must allow for a wider range of possible candidates for exercise determining sets $B$. On atoms of $\mu$ we may want to sometimes stop and sometimes continue, although we must still take stopping decisions which do not violate the martingale property. As the stopping decision in the continuous case is based on the natural filtration of the martingale $M$, if $M_1$ ends up at the atom of $\mu$, then it is not clear, using only the structure of $f$ and $g$, what part of mass at time-1 should be stopped and what part should be allowed to continue. This is the reason why we must extend the notion of the left-curtain martingale coupling.

The main effort in this article is in proving Theorem~\ref{thm:atoms} which extends the left-curtain martingale coupling to the presence of atoms in the starting law $\mu$. We show that this extended coupling is again characterised by lower and upper functions, $R$ and $S$, respectively.  However, while $f$ and $g$ are multi-valued on the atoms of $\mu$, $R$ and $S$ remain well-defined. Then our second achievement is to show how the structure of $R$ and $S$ can be used to characterise the model and stopping rule which achieves the highest possible price for the American put, and the cheapest superhedge. This generalises results of Hobson and Norgilas~\cite{HobsonNorgilas:17}: for arbitrary $\mu$ and $\nu$, the highest model based price of the American put is equal to the cost of the cheapest superhedge.

\section{Preliminaries and set-up}
\label{sec:setup}

Let $\sM(\R^n)$ (respectively $\sP(\R^n)$) be the set of measures (respectively probability measures) on $\R^n$. Given an integrable $\eta\in\sM(\R)$, i.e. $\int_\R\lvert x\lvert\eta(dx)<\infty$, define $\bar{\eta} = \frac{\int_\R x \eta(dx)}{\int_\R \eta(dx)}$ to be a barycentre of $\eta$. Let $\sI_\eta$ with endpoints $\{ \ell_\eta, r_\eta \}$ be the smallest interval containing the support of $\eta$ (with the convention that finite endpoints are included).
Define $P_\eta : \R \mapsto \R^+$ by $P_\eta(k) = \int_{-\infty}^k (k-x) \eta(dx)$.
Then $P_\eta$ is convex and increasing, $\lim_{ z \downarrow -\infty} P_\eta(z)=0$, $\lim_{z \uparrow \infty} P_\eta(z) - \eta(\R)(z- \bar{\eta})^+=0$ and $\{k : P_{\eta}(k) > \eta(\R)(k - \bar{\eta})^+ \} \subseteq \sI_\eta$.
Note that $P_{\eta}$ is related to the potential $U_\eta$ defined by $U_\eta(k) : = - \int_{\R} |k-x| \eta(dx)$ by
$P_\eta(k) = \frac{1}{2}(- U_\eta(k) + (k- \bar{\eta}) \eta(\R))$. For $\eta \in \sP(\R)$ let $F_\eta$ be the distribution function of $\eta$ and let $G_\eta:(0,1) \mapsto \R$ be the quantile function of $\eta$, which is taken to be left-continuous unless otherwise stated.

Two measures $\eta$ and $\chi$ are in convex order, and we write $\eta \leq_{cx} \chi$, if and only if $\eta(\R)= \chi(\R)$, $\bar{\eta}= \bar{\chi}$ and $P_{\eta}(k) \leq P_{\chi}(k)$ on $\R$. Necessarily we must have
$\ell_\chi \leq \ell_\eta \leq r_\eta \leq r_\chi$. 
For any two probability measures $\eta,\chi\in\sP(\R)$ we write $\pi\in{\Pi}(\eta,\chi)$ if $\pi\in\sP(\R^2)$ and has first marginal $\eta$ and second marginal $\chi$. If $\pi\in{\Pi}(\eta,\chi)$ is such that the following martingale condition holds
\begin{equation}
\int_{x \in B} \int_{y \in \R} y \pi(dx,dy) = \int_{x \in B} \int_{y \in \R} x \pi(dx,dy) = \int_B x \eta(dx) \hspace{10mm}
\mbox{$\forall$ Borel $B \subseteq \R$},
\label{eq:martingalepi}
\end{equation}
we write $\pi\in{\Pi}_M(\eta,\chi)\subset{\Pi}(\eta,\chi)$ and say that $\pi$ is a martingale coupling of $\eta$ and $\chi$. By a classical result of Strassen \cite{Strassen:65}, ${\Pi}_M(\eta,\chi)$ is non-empty if and only if $\eta\leq_{cx}\chi$.
\begin{defn}[Hobson and Neuberger~\cite{HobsonNeuberger:17}]
Suppose $\mu \leq_{cx} \nu$.

Let $\sS = (\Omega, \sF, \Prob, \F = \{ \sF_0, \sF_1, \sF_2 \})$ be a filtered probability space. We say $M=(M_0,M_1,M_2)=(\bar{\mu},X,Y)$ is a $(\sS,\mu,\nu)$ consistent stochastic process and we write $M \in \bM(\sS, \mu, \nu)$ if
\begin{enumerate}
\item $M$ is a $\sS$-martingale
\item $\sL(M_1) = \mu$ and $\sL(M_2) = \nu$
\end{enumerate}
We say $(\sS,M)$ is a $(\mu,\nu)$-consistent model if $\sS$ is a filtered probability space and $M$ is a $(\sS,\mu,\nu)$ consistent stochastic process.
\end{defn}

Let $(\eta_n)_{n\geq1}$ be a sequence of probability measures in $\sP(\R)$. For $\eta\in\sP(\R)$, we write $\eta_n\xrightarrow{w}\eta$, and say $\eta_n$ converges weakly to $\eta$, if $\lim_{n\to\infty}\int fd\eta_n=\int fd\eta$ for all bounded and continuous functions $f$ on $\R$ (see Billingsley \cite{Billingsley:2013}).
If $\eta_n\xrightarrow{w}\eta$,
if $\eta_n \leq_{cx} \eta$ and if $(\eta_n)_{n\geq1}$ is increasing in convex order, i.e. $\eta_n\leq_{cx}\eta_{n+1}$ for each $n$, then we write $\eta_n \uparrow_{cx} \eta$.
\begin{lem}\label{lem:approx}
Suppose $\mu\in\sP(\R)$ is integrable. Then there exists a sequence $(\mu_n)_{n\geq1}$ of finitely supported integrable measures in $\sP(\R)$  such that $\mu_n \uparrow_{cx} \mu$.
\end{lem}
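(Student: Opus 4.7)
The plan is to exploit the martingale characterisation of the convex order via conditional expectations applied to the quantile representation of $\mu$. On the probability space $\bigl((0,1),\mathcal{B}((0,1)),\Leb\bigr)$ let $U$ denote the identity and set $X:=G_\mu(U)$; then $X$ has law $\mu$ and is in $L^1$. For each $n\geq 1$ let $\sF_n$ be the finite $\sigma$-algebra generated by the dyadic partition $\bigl\{\bigl((k-1)2^{-n},k2^{-n}\bigr]:1\leq k\leq 2^n\bigr\}$ of $(0,1)$, put $X_n:=\E[X\mid\sF_n]$, and let $\mu_n$ be the law of $X_n$.

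On each dyadic interval $X_n$ is constant, equal to $2^n\int_{(k-1)2^{-n}}^{k2^{-n}}G_\mu(u)\,du$, i.e.\ to the conditional mean of $\mu$ on the corresponding quantile block. Hence each $\mu_n$ is a finitely supported probability measure with at most $2^n$ atoms, and integrability is immediate from $\|X_n\|_{L^1}\leq\|X\|_{L^1}<\infty$. Since $\sF_n\subseteq\sF_{n+1}$, the sequence $(X_n)$ is an $(\sF_n)$-martingale, so $\E X_n=\E X=\bar{\mu}$ and every $\mu_n$ has the same total mass and barycentre as $\mu$.

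For any convex $\phi:\R\to\R$ the conditional Jensen inequality yields $\phi(X_n)\leq \E[\phi(X_{n+1})\mid\sF_n]$ and $\phi(X_n)\leq \E[\phi(X)\mid\sF_n]$; taking expectations gives $\int\phi\,d\mu_n\leq\int\phi\,d\mu_{n+1}\leq\int\phi\,d\mu$. Combined with the equality of means and masses this is exactly $\mu_n\leq_{cx}\mu_{n+1}\leq_{cx}\mu$. For the weak convergence, $\bigvee_n\sF_n$ coincides up to null sets with $\mathcal{B}((0,1))$ (dyadic intervals generate the Borel $\sigma$-algebra and $X$ is Borel-measurable), so L\'{e}vy's upward martingale convergence theorem gives $X_n\to X$ almost surely and in $L^1$, whence $\mu_n\xrightarrow{w}\mu$. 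This establishes $\mu_n\uparrow_{cx}\mu$ in the sense of the paper.

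The construction is essentially forced by the three requirements, and the only ingredient beyond a direct application of Jensen's inequality is the weak-convergence step, which I regard as the main (and mild) obstacle: one must ensure that the approximating partitions are fine enough to recover $\mu$ in the limit, and this is precisely what L\'{e}vy's theorem delivers along any refining sequence whose limiting $\sigma$-algebra is Borel. An alternative (essentially equivalent) route is to partition $[-N,N]$ into short intervals and place mass at conditional barycentres, aggregating the tails into two extra atoms; the dyadic-quantile construction above just avoids having to deal with the tails separately.
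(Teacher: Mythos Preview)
Your proof is correct and takes a genuinely different route from the paper's. The paper argues via potentials: it invokes the equivalence $\mu_n\uparrow_{cx}\mu\iff U_{\mu_n}\downarrow U_\mu$ pointwise (Chacon), and then cites the Chacon--Walsh construction to produce a sequence of piecewise-linear concave functions in $\mathcal{U}_\mu$ decreasing to $U_\mu$, each of which corresponds to a finitely supported probability measure sandwiched in convex order between $\delta_{\bar\mu}$ and $\mu$. Your argument instead realises $\mu$ as the law of $X=G_\mu(U)$ on $((0,1),\Leb)$ and takes $\mu_n=\sL(\E[X\mid\sF_n])$ for the dyadic filtration; the convex-order chain is then conditional Jensen, and weak convergence is L\'evy's upward theorem.

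What each buys: the paper's approach ties directly into the potential-function machinery used elsewhere in the paper, but is not self-contained (it defers to two references for the key steps). Your construction is explicit, elementary, and self-contained, and it dovetails nicely with the quantile representation $X=G_\mu(U)$ that the paper itself adopts in Section~3. One small caveat: when you write ``for any convex $\phi$'' in the Jensen step, you implicitly need $\phi(X)\in L^1$; this is automatic for the put payoffs $\phi_k(x)=(k-x)^+$ that actually define $\leq_{cx}$ in the paper, so the argument goes through, but it would be cleaner to restrict to those test functions explicitly.
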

\begin{proof}
Recall that for any $\eta\in\sP(\R)$, $U_\eta$ is concave, linear on each interval $I\subset\R$ with $\eta(I)=0$, $U_\eta(x)\leq-\lvert\bar{\eta}-x\lvert=U_{\delta_{\bar{\eta}}}(x)$ on $\R$, and $\lim_{\lvert x\lvert\to\infty}U_\eta(x)+\lvert\bar{\eta}-x\lvert=0$. Moreover, $\mu_n \uparrow_{cx} \mu$ if and only if $U_{\mu_n}\downarrow U_\mu$ pointwise, see Chacon~\cite{Chacon:77}. Let $\mathcal{U}_\mu$ be a set of piecewise linear concave functions $\tilde U:\R\to\R_-$ such that ${U}_\mu(x)\leq \tilde{U}(x)\leq U_{\delta_{\bar{\mu}}}(x)$. Then each $\tilde{U}\in\mathcal{U}_\mu$ corresponds to a finitely supported integrable probability measure $\tilde{\mu}$ on $\R$ such that $\delta_{\bar{\mu}}\leq_{cx}\tilde{\mu}\leq_{cx}\mu$. Finally, Chacon and Walsh \cite{Chacon:76} provide a sequence of functions $(\tilde U_n)_{n\geq1}$ in $\mathcal{U}_\mu$, such that $\tilde U_n\downarrow U_\mu$ pointwise, proving our claim.
\end{proof}

\section[General case]{An extension of the left-curtain mapping to the general case}
\label{sec:atoms}
In this section we construct a new representation of the left-curtain martingale coupling of Beiglb\"{o}ck and Juillet~\cite{BeiglbockJuillet:16}. Our approach is to construct $(X,Y)$ from a pair of independent uniform $U(0,1)$ random variables $U$ and $V$. The construction of $X$ is straightforward: we set $X= G_{\mu}(U)$. 

It remains to construct $Y$. First we consider the case of a point mass at $w$, $\mu = \delta_w$, and show how to construct functions $R=R_{\mu,\nu}$ and $S=S_{\mu,\nu}$ with $R_{\mu,\nu}(u) \leq G_\mu(u) \leq S_{\mu,\nu}(u)$, such that if $X=G_{\delta_w}(U)=w$ and $Y \in \{ R(U),S(U) \}$ with $\Prob(Y = R(u)|U=u) = \frac{S(u)-G(u)}{S(u)-R(u)}$ then $Y$ has law $\nu$. In particular, conditional on $U=u$, $Y$ takes values in $\{ R(u), S(u) \}$ and satisfies $\E[Y|U=u] = G_\mu(u)$. Second, we show how this result extends to the case of a measure $\mu$ consisting of finitely many atoms. Third, for the case of general $\mu$ we construct an approximation $(\mu_n)_{n \geq 1}$ of $\mu$ and associated functions $(R_n, G_n, S_n)_{n \geq 1}$ where each $\mu_n$ is finitely supported. We show that we can define limits $(R,G,S)$ such that $(R,G,S)$ can be used to construct a martingale $M = (M_0 = \bar{\mu}, M_1=X, M_2= Y)$ with the property that $X=G(U)$ and $Y \in \{ R(U),S(U) \}$ and such that $\sL(X)=\mu$ and $\sL(Y) = \nu$. The functions $R,S:(0,1) \mapsto \R$ we define have the properties
\begin{equation}R(u) \leq G(u) \leq S(u); \hspace{10mm} \mbox{$S$ is increasing;} \hspace{10mm} \mbox{for $0 < u < v < 1$, $R(v) \notin (R(u),S(u))$}.
\label{eq:RSproperties}
\end{equation}

We suppose $\mu\leq_{cx}\nu$ are fixed and given and we abbreviate the quantile function $G_\mu$ by $G$. The aim of this section is to prove the following theorem:
\begin{thm}
\label{thm:atoms}
There exist functions $R,S:(0,1) \mapsto \R$ satisfying \eqref{eq:RSproperties}
such that if we define $X(u,v)=X(u)=G(u)$ and $Y(u,v) \in \{R(u),S(u) \}$ by $Y(u,v) = G(u)$ on $G(u)=S(u)$ and
\begin{equation} Y(u,v) = R(u) I_{\{ v \leq \frac{S(u) - G(u)}{S(u)-R(u)} \}} +  S(u) I_{ \{ v > \frac{S(u) - G(u)}{S(u)-R(u)} \} }
\label{eq:YUVdef}
\end{equation}
otherwise, and if $U$ and $V$ are independent $U(0,1)$ random variables then
$M = (\bar{\mu},X(U),Y(U,V))$ is a $\F = (\sF_0 = \{ \emptyset, \Omega \}, \sF_1 = \sigma(U), \sF_2 = \sigma(U,V) \})$-martingale for which $\sL(X) = \mu$ and $\sL(Y) = \nu$.

In particular, if $\Omega = (0,1) \times (0,1)$, $\sF = \sB(\Omega)$, $\Prob = \Leb(\Omega)$, if $\F$ and $M$ are defined as above and if $\sS = (\Omega, \sF, \F, \Prob)$ then $(\sS,M)$ is a $(\mu,\nu)$-consistent model.
\end{thm}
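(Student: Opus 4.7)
The proof unfolds in three stages, matching the road map announced at the start of Section~\ref{sec:atoms}: (1) a single atom $\mu=\delta_w$; (2) a general finitely supported $\mu$; and (3) the general case, via approximation using Lemma~\ref{lem:approx}.

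For $\mu=\delta_w$, convex order forces $\bar\nu=w$, so the task reduces to writing $\nu$ as an average over $u\in(0,1)$ of two-point barycentric distributions supported on $\{R(u),S(u)\}$ with mean $w$. I would take $S:(0,1)\to[w,r_\nu]$ to be an increasing function which, together with the weight $\tfrac{w-R(u)}{S(u)-R(u)}$, pushes Lebesgue measure on $(0,1)$ forward onto $\nu|_{[w,\infty)}$ (with a dedicated flat range handling any atom of $\nu$ at $w$, where the special case $R(u)=G(u)=S(u)$ in the theorem statement is invoked). The value $R(u)\in[\ell_\nu,w]$ is then forced by the barycentre equation, and the complementary weight automatically pushes forward onto $\nu|_{(-\infty,w)}$. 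The properties in \eqref{eq:RSproperties} follow quickly: $S$ is increasing by construction, $R\le w\le S$ by the barycentre relation, and the non-nesting property $R(v)\notin(R(u),S(u))$ is inherited from the ``leftmost free point'' rule used to consume $\nu|_{(-\infty,w)}$.

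For $\mu=\sum_{i=1}^n\alpha_i\delta_{w_i}$ with $w_1<\dots<w_n$, I would proceed inductively in $i$, using the shadow-measure machinery of Beiglb\"ock--Juillet~\cite{BeiglbockJuillet:16}. Having processed atoms $w_1,\dots,w_{i-1}$ and consumed a portion $\hat\nu_1+\dots+\hat\nu_{i-1}$ of $\nu$, I define $\hat\nu_i$ to be the shadow of $\alpha_i\delta_{w_i}$ in the residual measure, apply the point-mass construction of stage~(1) to the pair $(\alpha_i\delta_{w_i},\hat\nu_i)$, and paste the resulting functions on the slab $u\in\bigl(\sum_{j<i}\alpha_j,\sum_{j\le i}\alpha_j\bigr)$. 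The left-curtain property of the shadow --- namely that $\hat\nu_i$ sits to the left of the yet-unallocated residual --- is precisely what propagates monotonicity of $S$ and non-nesting of $R$ across slab boundaries.

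The main obstacle is stage~(3). Given $\mu_n\uparrow_{cx}\mu$ from Lemma~\ref{lem:approx} and the triples $(R_n,G_n,S_n)$ furnished by stage~(2), I would extract pointwise limits $(R,G,S)$ at continuity points. Quantile convergence yields $G_n\to G$ at continuity points of $G$; stability of $\pi_{lc}$ under marginal convergence~\cite{Juillet:16}, combined with monotonicity inherited from $\mu_n\uparrow_{cx}\mu$, should furnish the limit $S$; and the barycentre relation then defines $R$. The delicate points will be: (i) showing that the properties in \eqref{eq:RSproperties}, especially the non-nesting condition, are preserved under the pointwise limit and not merely almost everywhere; and (ii) identifying $\sL(Y)$ with $\nu$ rather than with some strict sub-measure, which I would do by comparing the potentials $U_{\sL(Y)}$ and $U_\nu$ and invoking the convex-order convergence $U_{\mu_n}\downarrow U_\mu$ that underlies Lemma~\ref{lem:approx}. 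Once both points are settled, the martingale identity $\E[Y\mid U]=G(U)=X$ is immediate from \eqref{eq:YUVdef}, and the claim that $(\sS,M)$ is a $(\mu,\nu)$-consistent model follows at once.
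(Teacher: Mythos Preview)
Your three-stage architecture matches the paper's, and your stage~(2) is essentially identical: the paper does not invoke the shadow by name, but its construction --- apply the point-mass step to the leftmost atom $x_1$ using $P_\nu$, peel off the resulting $\nu_{\lambda_1}$, and verify $\tilde\mu_1\leq_{cx}\tilde\nu_1$ (Lemma~\ref{lem:cx}) --- is exactly the shadow of $\lambda_1\delta_{x_1}$ in $\nu$.

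Stage~(1), however, is underspecified. You say ``take $S$ increasing so that, with weight $\tfrac{w-R(u)}{S(u)-R(u)}$, it pushes Lebesgue forward onto $\nu|_{[w,\infty)}$, then $R(u)$ is forced by the barycentre equation.'' But the barycentre relation $\tfrac{S-w}{S-R}R+\tfrac{w-R}{S-R}S=w$ is an identity, not a constraint; it cannot determine $R$ from $S$. What pins down the pair $(R(u),S(u))$ is the \emph{joint} requirement that $\nu$ restricted to $[R(u),S(u)]$ (with suitable endpoint fractions) have mass $u$ and mean $w$. The paper implements this via the tangent construction: $\alpha(p),\beta(p)$ are the tangency points of lines through $(w,p)$ to $P_\nu$, and $\Upsilon(p)=a(p)-b(p)$ is the slope difference; then $S=\alpha\circ\Upsilon^{-1}$ and $R=\beta\circ\Upsilon^{-1}$. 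This is what makes Lemma~\ref{lem:test} go through.

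Stage~(3) is where the real gap lies. Juillet's stability result gives weak convergence $\pi^n_{lc}\to\pi_{lc}$, but this does not by itself yield pointwise (or even a.e.) convergence of the functions $R_n,S_n$; extracting functions from a weak limit of measures is precisely the difficulty the theorem is meant to resolve. The paper works entirely at the function level: a diagonal argument on $\Q\cap(0,1)$, with finiteness of subsequential limits supplied by a priori bounds (Lemma~\ref{lem:boundS} for $S_n$, Lemma~\ref{lem:boundR} for $R_n$). The most delicate point --- which your sketch does not anticipate --- is that $R_n$ is \emph{not} globally monotone, so the diagonal-plus-monotonicity trick fails; the paper shows $R_\infty$ is decreasing only on the intervals of $\sA=\{u:G(u+)<S(u)\}$, and establishes $R_n\to R$ only on $\{G<S\}$ (Lemma~\ref{lem:convergeR}). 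On $\{G=S\}$ one simply sets $R=G=S$. Finally, $\sL(Y)=\nu$ is obtained not by comparing potentials but by a direct bounded-convergence argument on the distribution function (Proposition~\ref{prop:intconv}), exploiting that the set of bad $x$ is countable.
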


\begin{rem}
For $n\geq1$, let $\pi_{lc}^n$ be the left-curtain coupling of the initial law $\mu_n$ (consisting of $n$ atoms) and target law $\nu$. Juillet \cite{Juillet:16} proved that if $(\mu_n)_{n\geq1}$ converges weakly to $\mu$ then $(\pi^n_{lc})_{n\geq1}$ converges weakly to the left-curtain coupling of $\mu$ and $\nu$.

Here we argue differently. We use the fact that $\pi_{lc}^n$ can be represented by an explicitly constructed triple $(S_n,G_n,R_n)$. Then, by sending $n\to+\infty$, we show that the limiting functions give rise to the left-monotone martingale coupling, and thus also to $\pi_{lc}$, of $\mu$ and $\nu$.
\end{rem}
When $\mu$ is continuous and $f$ and $g$ are well-defined the construction of this section is related to that of Beiglb\"{o}ck and Juillet~\cite{BeiglbockJuillet:16} (see also Henry-Labord\`{e}re and Touzi~\cite{Touzi:16}) via the relationships $S =g \circ G_\mu$ and $R=f\circ G_\mu$. Suppose $\nu$ is also continuous and fix $x$. Then under the left-curtain martingale coupling $\{f(x),g(x)\}$ with $f(x)\leq x\leq g(x)$ are solutions to the mass and mean conditions
 \begin{align}
 \int^x_f\mu(dz)&=\int^g_f\nu(dz),\label{eq:ContMass}\\
 \int^x_fz\mu(dz)&=\int^g_fz\nu(dz).\label{eq:ContMean}
 \end{align}
 When $\mu$ has atoms, $G_{\mu}$ has intervals of constancy and $f$ and $g$ are multi-valued, but $R$ and $S$ remain well-defined. See Figure~\ref{fig:RGSfg}. Then, for general $\mu$ and $\nu$, the appropriate generalisations of \eqref{eq:ContMass} and \eqref{eq:ContMean} are
 \begin{align}
 \int_{(R(u),G(u))}\mu(dz)+\overline\lambda^\mu_u&=\int_{(R(u),S(u))}\nu(dz)+\underline\lambda^\nu_u+\overline\lambda^\nu_u,\label{eq:mass}\\
 \int_{(R(u),G(u))}z\mu(dz)+\overline\lambda^\mu_uG(u)&=\int_{(R(u),S(u))}z\nu(dz)+\underline\lambda^\nu_uR(u)+\overline\lambda^\nu_uS(u),\label{eq:mean}
 \end{align}
 respectively, where the quantities $0\leq\overline\lambda^\mu_u\leq\mu(\{G(u)\})$, $0\leq\underline\lambda^\nu_u\leq(\nu-\mu)(\{R(u)\})$, $0\leq\overline\lambda^\nu_u\leq\nu(\{S(u)\})$ are uniquely determined by the triple $(R,G,S)$. Essentialy, \eqref{eq:mass} is preservation of mass condition and \eqref{eq:mean} is preservation of mean condition. Together they give the martingale property.
 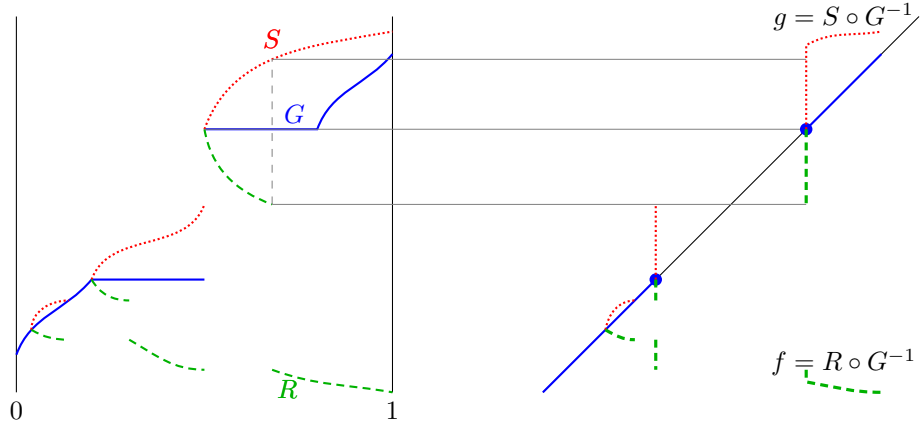
\begin{figure}[H]

\centering
\begin{tikzpicture}[scale=1,
declare function={	
k1=2.1;
k2=1;
a(\x)=((k1-\x)*(\x<k1))-k1-1;
b(\x)=((k2-\x)*(\x<k2))-k1-1;
x1=-6.8;
x2=-6.3;
z1=-5.2;
z2=-4.7;
 }]

               \draw[ black] (-7,0)--(-7,5);
               \draw[black] (-2,0)--(-2,5);

               \draw[name path=diag, black] (0,0) -- (5,5);

    \draw[blue,thick, name path=g1] (-7,0.5) to[out=70, in=230] (-6,1.5) -- (-4.5,1.5) ;
     \draw[blue,thick, name path=g2] (-4.5,3.5) -- (-3,3.5) to[out=70, in=230] (-2,4.5)  ;

      \path [name path=lineA](x1,0) -- (x1,5);
	\path [gray, very thin, name intersections={of=lineA and g1}] (x1,0) -- (intersection-1);
	\coordinate (temp1) at (intersection-1);

	 \path [name path=lineA](x2,0) -- (x2,5);
	\path [gray, very thin, name intersections={of=lineA and g1}] (x2,0) -- (intersection-1);
	\coordinate (temp2) at (intersection-1);
	
	\draw[red,thick, densely dotted,name path=s1] (temp1) to[out=90, in=180] (temp2) ;
	\draw[red,thick, densely dotted,name path=s2] (-6,1.5) to[out=70, in=250] (-4.5,2.5) ;
	\draw[red,thick, densely dotted,name path=s3] (-4.5,3.5) to[out=70, in=190] (-2,4.8) ;
	
	\gettikzxy{(temp1)}{\l}{\k};
	\gettikzxy{(temp2)}{\n}{\m};
	\draw[black!30!green,thick, densely dashed, name path=r1] (temp1) to[out=330, in=180] (\n,0.7) ;
	\draw[black!30!green,thick, densely dashed, name path=r2] (-6,1.5) to[out=300, in=180] (-5.5,\m) ;
	\draw[black!30!green,thick, densely dashed, name path=r3] (-5.5,0.7) to[out=330, in=180] (-4.5,0.3) ;
	\draw[black!30!green,thick, densely dashed, name path=r4] (-4.5,3.5) to[out=280, in=160] (-3.6,2.5) ;
	\draw[black!30!green,thick, densely dashed, name path=r5] (-3.6,0.3) to[out=340, in=170] (-2,0) ;
	

    \path[gray, very thin] (\n,0.7) -- (temp2);
    \path[gray, very thin] (\n,0.7) -- (0.7,0.7);
    \path[gray, very thin]  (temp2)--(\m,\m);
    \path[gray, very thin]  (-2,4.5)--(4.5,4.5);

      \path [name path=lineA](-5.5,0.7) -- (-5.5,5);
	\path [gray, very thin, name intersections={of=lineA and s2}] (-5.5,0.7) -- (intersection-1);
	\coordinate (vs2) at (intersection-1);
	\gettikzxy{(vs2)}{\aa}{\bb}
	 \path[gray, very thin]  (vs2)--(\bb,\bb);

	\path[gray, very thin] (-4.5,0.3) -- (-4.5,3.5);
	\draw[gray, very thin] (-4.5,3.5) -- (3.5,3.5);
	\path[gray, very thin] (-4.5,0.3) -- (0.3,0.3);
	
	\path[gray, very thin] (-3.6,0.3) -- (-3.6,2.5) ;
	\draw[gray, very thin] (-3.6,2.5) -- (3.5,2.5) ;
	
	 \path [name path=lineA](-3.6,2.5) -- (-3.6,5);
	\draw [gray, very thin, dashed, name intersections={of=lineA and s3}] (-3.6,2.5) -- (intersection-1);
	\coordinate (test1) at (intersection-1);
	\gettikzxy{(test1)}{\testx}{\testy}
	 \draw[gray, very thin]  (test1)--(3.5,\testy);

      \path [name path=lineA](-3,3.5) -- (-3,5);
	\path [gray, very thin, name intersections={of=lineA and s3}] (-3,3.5) -- (intersection-1);
	\coordinate (vs3) at (intersection-1);
	\gettikzxy{(vs3)}{\aaa}{\bbb}
	 \path[gray, very thin]  (vs3)--(\bbb,\bbb);

	\path [name path=lineA](-3,3.5) -- (-3,0);
	\path [gray, very thin, name intersections={of=lineA and r5}] (-3,3.5) -- (intersection-1);
	\coordinate (vr5) at (intersection-1);
	\gettikzxy{(vr5)}{\aaaa}{\bbbb}
	 \path[gray, very thin]  (vr5)--(\bbbb,\bbbb);
	
	  \path[gray, very thin]  (vs2)--(\bb,\bb);
	   \path[gray, very thin]  (vs2)--(\bb,\bb);

	\node [scale=0.5, shape=circle, fill, blue] at (3.5,3.5) {} ;
	\node [scale=0.5, shape=circle, fill, blue] at (1.5,1.5) {} ;

     \draw[blue,thick] (0,0) -- (1.5,1.5);
      \draw[blue,thick] (3.5,3.5) -- (4.5,4.5);
    	\draw[red,densely dotted, thick] (\k,\k) to[out=80, in=180] (\m,\m);
     \draw[red,densely dotted, thick] (1.5,1.5) -- (1.5,2.5);
     \draw[red,densely dotted, thick] (3.5,3.5) -- (3.5,\bbb) to[out=30, in=190] (4.5,4.8);

     \draw[black!30!green, densely dashed,very thick] (\k,\k) to[out=330, in=180] (\m,0.7);
     \draw[black!30!green,densely dashed,very thick] (1.5,1.5) -- (1.5,\m);
     \draw[black!30!green,densely dashed,very thick] (1.5,0.7) -- (1.5,0.3);
     \draw[black!30!green,densely dashed,very thick] (3.5,3.5) -- (3.5,2.5);
     \draw[black!30!green,densely dashed,very thick] (3.5,0.3) -- (3.5,\bbbb) to[out=350, in=180] (4.5,0);

\node[red] at (-3.6,4.7) {$S$};
\node[blue] at (-3.3,3.7) {$G$};
\node[black!30!green] at (-3.4,0.04) {$R$};

\node[red] at (-3.6,4.7) {$S$};
\node[black!30!green] at (-3.4,0.04) {$R$};

\node[below] at (-7,0) {$0$};
\node[below] at (-2,0) {$1$};
\node[black] at (4,0.4) {$f=R \circ G^{-1}$};
\node[black] at (4,5) {$g=S \circ G^{-1}$};

\end{tikzpicture}

\caption{Sketch of $R,G,S$ and the corresponding $f$ and $g$. On the atoms of $\mu$, $G$ is flat, and $f$ and $g$ are multi-valued, but $R$ and $S$ remain well-defined.}
\label{fig:RGSfg}
\end{figure}

\subsection{The case where $\mu$ is a point mass}
\label{ssec:pointmass}
The goal in this section is to prove Theorem~\ref{thm:atoms} in the special case where $\mu$ is a point mass. We assume that $\mu$ is a unit atom at $w$ and $\nu$ is centred at $w$. Then $\mu = \delta_w \leq_{cx} \nu$.

Let $P(k)=P_\nu(k) = \int_{-\infty}^\infty (k-z)^+ \nu(dz)$. Then $P(k) \geq (k-w)^+$. For $p \in [0,P(w)]$ define $\alpha:[0,P(w)] \mapsto [w,\infty]$ and $\beta:[0,P(w)] \mapsto [-\infty,w]$ by
\begin{equation}
\alpha(p) = \arginf_{k>w} \left\{ \frac{P(k) - p}{k - w} \right\}; \hspace{20mm} \beta(p) = \argsup_{k<w} \left\{ \frac{p - P(k)}{w-k} \right\},
\label{eq:alphabetadef}
\end{equation}
see Figure~\ref{fig:alphabetaab}.
Then $\alpha$ is decreasing and $\beta$ is increasing. Since the $\arginf$ and $\argsup$ may not be uniquely defined (this happens when $\nu$ has intervals with no mass) we avoid indeterminacy by assuming that $\alpha$ and $\beta$ are right-continuous. (We also set $\alpha(P(w))= \inf\{ z>w: F_\nu(z) > F_\nu(w) \}$ and
$\beta(P(w))= \sup\{ z<w: F_\nu(z) < F_\nu(w-) \}$. Note that $\alpha(0)= r_\nu$ and $\beta(0)= \ell_\nu$.) If $\nu$ has atoms then $\alpha$ and $\beta$ may fail to be strictly monotonic.

For $p \in (0,P(w))$ define also
\begin{equation}
a(p) = \inf_{k>w} \frac{P(k) - p}{k-w}  =\frac{P(\alpha(p)) - p}{\alpha(p)-w}; \hspace{20mm} b(p) = \sup_{k<w} \frac{p - P(k)}{w-k} = \frac{p - P(\beta(p))}{w-\beta(p)}. \label{eq:abdef}
\end{equation}
Extend the representations to $[0,P(w)]$ by taking limits.
Then $a:[0,P(w)] \mapsto [P'(w+),1]$ is decreasing and $b:[0,P(w)] \mapsto [0,P'(w-)]$ is increasing. We have the representations
\[ a(p) = 1 - \int_0^p \frac{dq}{\alpha(q)-w}; \hspace{20mm} b(p) = \int_0^p \frac{dq}{w-\beta(q)}. \]

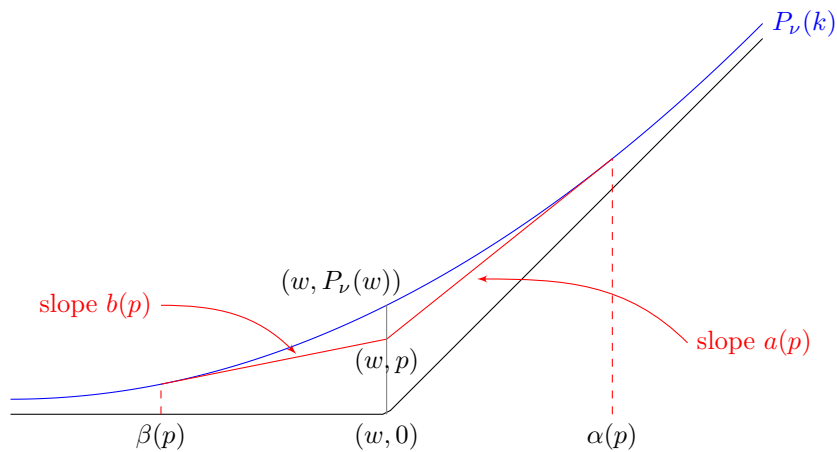
\begin{figure}[H]

\centering
\begin{tikzpicture}[scale=1,
declare function={	
diag(\x)=\x;
k1=5;
P1(\x)=(\x-k1)*(\x>k1);
    	a=0.05;
	c1=1;
	c2=0.2;
	quad1(\x)=a*(\x^2)+c1;
	quad2(\x)=a*(\x^2)+c2;
	slope=0.2;
	xs(\x)=\x/(2*a);
	my=(k1-xs(slope))*slope+quad2(xs(slope));
	d=4*k1*k1*a*a-4*a*(my-c2);
	xr=k1+sqrt(d)/(2*a);
}]

               \draw[name path=payoff, black] plot[ domain=0:10, samples=100] (\x,{P1(\x)});

               \node (p1)[right,blue] at (10,{quad2(10)}) {$P_\nu(k)$};
                \draw[name path=payoff, blue] plot[ domain=0:10, samples=100] (\x,{quad2(\x)});

                \draw[name path=tan, red] ({xs(slope)},{quad2(xs(slope))}) -- (k1,my);
                \draw[name path=tan, red] (k1,my) -- (xr,{quad2(xr)});


             \draw[red, dashed] ({xs(slope)},0) -- ({xs(slope)},{quad2(xs(slope))});
             \node[below] at ({xs(slope)},0) {$\beta(p)$};

              \draw[red, dashed] ({xr},0) -- (xr,{quad2(xr)});
             \node[below] at (xr,0) {$\alpha(p)$};

              \draw[gray] (k1,0) -- (k1,{quad2(k1)});
             \node[below] at (k1,0) {$(w,0)$};

               \node[below] at (k1,my) {$(w,p)$};
               \node[above] at (k1-0.6,{quad2(k1)}) {$(w,P_\nu(w))$};

               \draw[-latex',red] ({xs(slope)},{quad2(k1)}) to[out=0 in=180] (k1-1.2,0.8);
               \node[red, left] at ({xs(slope)},{quad2(k1)}) {$\text{slope } b(p)$};

               \draw[latex'-,red] (k1+1.2,01.8)  to[out=0 in=180] (xr+1,{quad2(k1)-0.5});
               \node[red, right] at (xr+1,{quad2(k1)-0.5}) {$\text{slope } a(p)$};

\end{tikzpicture}
\caption{The definitions of $\alpha$, $\beta$, $a$ and $b$. $\Upsilon(p)$ is the difference in the slopes of the tangents to $P_\nu(k)$ which pass through $(w,p)$. }
\label{fig:alphabetaab}
\end{figure}

Let $\Upsilon:[0,P(w)] \mapsto [0,1]$  be given by $\Upsilon(p) = a(p) - b(p)$. Then $\Upsilon(0)=1$ and $\Upsilon(P(w)) = \nu ( \{ w \} )$. 
$\Upsilon$ is a decreasing, concave function which is absolutely continuous on $[0,P(w))$. We can define an inverse $\Upsilon^{-1}:[0,1] \rightarrow [0,P(w)]$ provided we set $\Upsilon^{-1}(q)=1$ for $q \leq \nu(\{w\})$.  Where $\alpha$ and $\beta$ are continuous we have $\Upsilon'(p) = - \frac{1}{\alpha(p)-w} - \frac{1}{w-\beta(p)}$.
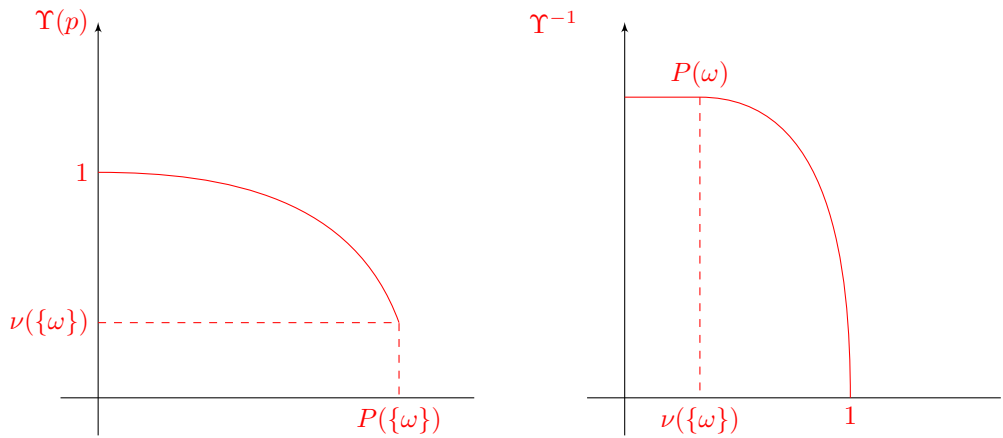
\begin{figure}[H]

\centering
\begin{tikzpicture}[scale=1,
declare function={	
diag(\x)=\x;
k1=5;
d=0.5;
k1x=1.5;
x1=1;
x2=2;
x3a=3;
x3=3.5;
x4=5.5;
x5=10;
P1(\x)=(1-d)*(\x-k1)*(\x>k1);
    	a=(1-d)*0.05;
	c1=1;
	c2=0.2;
	quad1(\x)=a*(\x^2)+c1;
	quad2(\x)=a*(\x^2)+c2;
	slope=0.02;
	xs(\x)=\x/(2*a);
	my=(k1x-xs(slope))*slope+quad2(xs(slope));
		xr=k1x+sqrt(d)/(2*a);
	d1=4*k1x*k1x*a*a+4*a*c2;
	xr1=k1x+sqrt(d1)/(2*a);
}]

               \draw[name path=payoff, black,-latex'] (-5,-.5)--(-5,5);
               \draw[name path=payoff, black,-latex'] (2,-.5)--(2,5);
               \draw[name path=payoff, black] (-5.5,0)--(0,0);
               \draw[name path=payoff, black] (1.5,0)--(7,0);
             \draw[red] (-5,3)  to[out=0, in=110] (-1,1) ;
             \draw[red, dashed] (-5,1) -- (-1,1) -- (-1,0) ;

              \node[left,red] at (-5,5) {$\Upsilon(p)$};
                \node[left,red] at (-5,3) {$1$};
             \node[left,red] at (-5,1) {$\nu(\{\omega\})$};
             \node[below,red] at (-1,0) {$P(\{\omega\})$};
              \draw[red] (2,4) -- (3,4) to[out=0, in=90] (5,0) ;
             \draw[red, dashed] (3,4) -- (3,0) ;

              \node[left,red] at (1.5,5) {$\Upsilon^{-1}$};
                \node[above,red] at (3,4) {$P(\omega)$};
             \node[below,red] at (3,0) {$\nu(\{\omega\})$};
             \node[below,red] at (5,0) {$1$};

\end{tikzpicture}

\caption{Sketch of $\Upsilon$ and $\Upsilon^{-1}$. }
\label{fig:Upsilon}
\end{figure}

Define $S:(0,1) \mapsto \R$ by $S(u) = (\alpha \circ \Upsilon^{-1})(u)$ and $R:(0,1) \mapsto \R$ by $R(u) = (\beta \circ \Upsilon^{-1})(u)$. 

\begin{rem}
If $\nu$ does not charge an open interval $A \subset (w,\infty)$, then $P$ is linear on $A$. Then $\alpha$ jumps over this set and $S$ does not take values in $A$. Similarly if $\nu$ does not charge an open interval $B \subset (-\infty,w)$ then $R$ jumps over this interval.
\label{rem:jump}
\end{rem}
\begin{rem}
By construction, $\alpha$ and $\beta$ are both right-continuous. Since $\Upsilon^{-1}$ is continuous and decreasing, it follows that $R$ and $S$ are left-continuous. Moreover, 
$\lim_{u\to1}R(u)=\ell_\nu$ and $\lim_{u\to1}S(u)=r_\nu$.
\label{rem:limit}
\end{rem}

Let $Y$ be defined by \eqref{eq:YUVdef} in  Theorem~\ref{thm:atoms}. Note that since $\mu$ is a point mass $G(u)=w$ for all $u \in (0,1)$.

\begin{lem} Suppose $U,V$ are independent uniform random variables.
Then $Y(U,V)$ has law $\nu$.
\label{lem:test}
\end{lem}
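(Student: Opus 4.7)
The plan is to identify the law of $Y$ by matching potentials: since $P_\nu(k) = \int (k-y)^+ \nu(dy)$ is convex with distributional second derivative $\nu$, it suffices to prove $\E[(k-Y)^+]=P_\nu(k)$ for every $k\in\R$. Conditioning on $U$ and integrating out the independent $V$, and isolating the contribution of the set $\{u : G(u)=S(u)\}$ on which $Y=w$ deterministically (whose Lebesgue measure accounts for $\nu(\{w\})$), one has
$$
\E[(k-Y)^+] = \nu(\{w\})(k-w)^+ + \int_{\{S(u)>w\}} \left\{ \tfrac{S(u)-w}{S(u)-R(u)}(k-R(u))^+ + \tfrac{w-R(u)}{S(u)-R(u)}(k-S(u))^+ \right\} du.
$$

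Next I would change variables via $u=\Upsilon(p)$. Using
$$
-\Upsilon'(p) \;=\; \tfrac{1}{\alpha(p)-w}+\tfrac{1}{w-\beta(p)} \;=\; \tfrac{\alpha(p)-\beta(p)}{(\alpha(p)-w)(w-\beta(p))},
$$
the Radon--Nikodym weights simplify beautifully as $\tfrac{S-w}{S-R}|\Upsilon'|=\tfrac{1}{w-\beta}$ and $\tfrac{w-R}{S-R}|\Upsilon'|=\tfrac{1}{\alpha-w}$, so
$$
\E[(k-Y)^+] \;=\; \nu(\{w\})(k-w)^+ + \int_0^{P(w)} \tfrac{(k-\beta(p))^+}{w-\beta(p)}\,dp + \int_0^{P(w)} \tfrac{(k-\alpha(p))^+}{\alpha(p)-w}\,dp.
$$
I would then evaluate these by splitting on the sign of $k-w$. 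For $k\leq w$ the $\alpha$-integral vanishes; introducing $p_k = \sup\{p : \beta(p)<k\}$ and using the representation $b(p)=\int_0^p \tfrac{dq}{w-\beta(q)}$ gives $\int_0^{p_k}\tfrac{k-\beta(p)}{w-\beta(p)}dp = p_k - (w-k)b(p_k)$, and the defining identity $b(p_k)(w-\beta(p_k))=p_k-P_\nu(\beta(p_k))$ (with $\beta(p_k)=k$ up to the jump convention) collapses everything to $P_\nu(k)$. The case $k\geq w$ is symmetric, using $a(p)=1-\int_0^p \tfrac{dq}{\alpha(q)-w}$ and $a(p_k)(\alpha(p_k)-w)=P_\nu(\alpha(p_k))-p_k$; the $\beta$- and $\alpha$-integrals then combine to $P_\nu(k)-(k-w)\nu(\{w\})$, and the atom term restores the missing $(k-w)\nu(\{w\})$ to recover $P_\nu(k)$ exactly.

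The main obstacle is dealing with the degeneracies of $\alpha$, $\beta$, and $\Upsilon$: intervals where $\nu$ assigns no mass force $\alpha$ or $\beta$ to jump (Remark~\ref{rem:jump}), and atoms of $\nu$ away from $w$ correspond to intervals of constancy of $\alpha$ or $\beta$. One must verify that on each such piece $P_\nu$ is linear, so that the change of variable still identifies the integrand with the correct increment of $P_\nu$; the endpoint convention for $\Upsilon^{-1}$ on $[0,\nu(\{w\}))$ is precisely what is needed to absorb the atom of $\nu$ at $w$. A convenient shortcut would be to observe that both sides of the claimed identity depend continuously on $\nu$ under weak convergence and to reduce to the case of finitely-supported $\nu$ by an approximation argument in the spirit of Lemma~\ref{lem:approx}, where the identity becomes purely algebraic.
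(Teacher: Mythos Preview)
Your strategy is sound and genuinely different from the paper's. The paper does not compute $\E[(k-Y)^+]$; instead it fixes a $C^1$ test function $\phi$ supported in $[w+\epsilon,w+\epsilon^{-1}]$ (and by symmetry one supported in $[w-\epsilon^{-1},w-\epsilon]$), so that the atom at $w$ is invisible. After the same substitution $u=\Upsilon(p)$ and the same identity $-\Upsilon'(p)=\tfrac{1}{\alpha(p)-w}+\tfrac{1}{w-\beta(p)}$, the paper rewrites $\E[\phi(Y)]=\int_0^{P(w)}\tfrac{\phi(\alpha(p))}{\alpha(p)-w}\,dp$, sets $\psi(y)=\phi(y)/(y-w)$, and integrates by parts using the closed form $\alpha^{-1}(y)=P_\nu(y)-(y-w)P_\nu'(y)$ to obtain $-\int P_\nu'(y)\phi'(y)\,dy=\int\phi\,d\nu$. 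Your route replaces this integration by parts by the representations $b(p)=\int_0^p (w-\beta)^{-1}dq$, $a(p)=1-\int_0^p(\alpha-w)^{-1}dq$ together with the tangency identities $b(p)(w-\beta(p))=p-P_\nu(\beta(p))$ and $a(p)(\alpha(p)-w)=P_\nu(\alpha(p))-p$; this is a legitimate and attractive alternative, and your observation that linearity of $P_\nu$ across the jumps of $\alpha,\beta$ keeps these identities valid with $\beta(p_k)$ replaced by $k$ is exactly the right way to absorb the gaps.

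One point to tighten: your first displayed decomposition asserts $\mathrm{Leb}\{u:S(u)=w\}=\nu(\{w\})$, but this is not literally what the endpoint conventions guarantee when $\nu$ has both an atom at $w$ and a gap adjacent to $w$. What is robust is that the change of variables $u=\Upsilon(p)$ carries $(\nu(\{w\}),1]$ bijectively onto $[0,P(w))$, so the $p$--integrals you write down account precisely for $u>\nu(\{w\})$; the residual contribution from $u\in(0,\nu(\{w\})]$ must then be shown to equal $\nu(\{w\})(k-w)^+$, which requires a short separate argument (or, as you note, follows painlessly from approximation by atomless $\nu$). The paper's choice to test only against $\phi$ vanishing near $w$ is exactly designed to sidestep this bookkeeping, which is the main thing its approach buys over yours.
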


\begin{proof}
Let $\phi$ be a test function: a continuously differentiable function with support contained in $[w+\epsilon, w+\epsilon^{-1}]$ for some $\epsilon \in (0,1)$. We will show that $\E[\phi(Y)] = \int \phi(y) \nu(dy)$. We can prove a similar result for test functions $\psi$ with support in $[w-\epsilon^{-1}, w-\epsilon]$. It follows that $\sL(Y)=\nu$.

By construction
\begin{eqnarray*}
\E[\phi(Y)] & = & \int_0^1 du \frac{w-R(u)}{S(u)-R(u)} \phi(S(u)) \\
& = & \int_0^1 du \frac{w-\beta \circ \Upsilon^{-1}(u)}{\alpha \circ \Upsilon^{-1}(u)-\beta \circ \Upsilon^{-1}(u)}\phi(\alpha \circ \Upsilon^{-1}(u)) \\
& = & \int_0^{P(w)} dp |\Upsilon'(p)| \frac{w-\beta(p)}{\alpha(p)-\beta(p)}\phi(\alpha(p)).
\end{eqnarray*}

But $\Upsilon'(p) = - \frac{\alpha(p) - \beta(p)}{(\alpha(p)-w)(w-\beta(p))}$. Thus, writing $\psi(y) = \frac{\phi(y)}{(y-w)}$ and using the fact that $\alpha^{-1}(y)=P(y)-(y-w)P'(y)$ except at the countably many points where $\alpha^{-1}$ is multi-valued, 
\begin{eqnarray*}
\E[\phi(Y)] & = & \int_0^{P(w)} dp \frac{\phi(\alpha(p))}{\alpha(p)-w} =  - \int_w^{\infty} d (\alpha^{-1}(y)) \psi(y)
 =  \int_w^\infty [ P(y)-(y-w)P'(y) ] \psi'(y) dy \\
& = & - \int_w^\infty P'(y) [ \psi(y) + (y-w) \psi'(y) ] dy = - \int_w^\infty P'(y) \phi'(y) dy = \int \phi(y) \nu(dy) .
\end{eqnarray*}

Hence $\E[\phi(y)] = \int \phi(y) \nu(dy)$.
\end{proof}

\begin{rem}
If $\alpha$ and $\beta$ are strictly monotonic at $\Upsilon^{-1}(u)$, then conditional on $U \leq u$, $Y$ has law $\nu$ conditioned to take values in
$[\beta \circ \Upsilon^{-1}(u), \alpha \circ \Upsilon^{-1}(u)]$. Necessarily, $\nu([\beta\circ \Upsilon^{-1}(u), \alpha \circ \Upsilon^{-1}(u)])=u$.

If there is an atom of $\nu$ at $\beta \circ \Upsilon^{-1}(u)$ or $\alpha \circ \Upsilon^{-1}(u)$ then we can choose appropriate masses $\underline{\lambda}_u$ and
$\overline{\lambda}_u$ such that
$\nu((\beta\circ \Upsilon^{-1}(u), \alpha \circ \Upsilon^{-1}(u))) + \underline{\lambda}_u \delta_{\beta \circ \Upsilon^{-1}(u)} + \overline{\lambda}_u \delta_{\alpha \circ \Upsilon^{-1}(u)}$ has total mass $u$ and mean $w$. We must have
$0 \leq \underline{\lambda}_u \leq \nu( \{ \beta \circ \Upsilon^{-1}(u) \} )$ and $0 \leq \overline{\lambda}_u \leq \nu( \{ \alpha \circ \Upsilon^{-1}(u) \} )$.

On $U \leq u_1$ let $Y^{u_1}=Y^{u_1}(U,V)$ be constructed as in \eqref{eq:YUVdef}. On $U > u_1$, let $Y^{u_1}$ be in a graveyard state $\Delta$. Then $\sL(Y^{u_1}) = \nu_{u_1} + (1-u_1) \delta_{\Delta}$ where $\nu_{u_1}$ is a measure on $[R(u_1),S(u_1)]$ with total mass $u_1$ and mean $w$. In particular, $\nu_{u_1} = \nu$ on $(R(u_1),S(u_1))$, $\nu_{u_1} \leq \nu$ on $\{ R(u_1),S(u_1) \}$ and $\nu_{u_1}=0$ on $[R(u_1),S(u_1)]^C$.
\label{rem:partway}
\end{rem}

\subsection{The case where $\mu$ consists of a finite number of atoms}
\label{ssec:finiteatoms}
Suppose $\mu = \sum_{i=1}^{N} \lambda_i \delta_{x_i}$ where $x_1 < x_2 \ldots < x_N$ with $\lambda_i > 0 $ and $\sum_{i=1}^N \lambda_i = 1$. Suppose $\nu$ is an arbitrary probability measure satisfying the convex order condition $\mu \leq_{cx} \nu$.

For $0 \leq p \leq P_\nu(x_1)$ we can construct $\alpha, \beta, a$ and $b$ as in \eqref{eq:alphabetadef} and \eqref{eq:abdef} (but relative to $x_1$ rather than the mean $w$) and set $\Upsilon=a-b$. 
For example, $\alpha(p)= \arginf_{k>x_1} \frac{P_\nu(k)-p}{k-x_1}$ and $a(p) = \inf_{k>x_1} \frac{P_\nu(k)- p}{k-x_1}$. Note that $\Upsilon(0) = \Lambda_1 := \inf_{x > x_1} \frac{P_\nu(x)}{x-x_1}$ and since $P_{\nu}(x) \geq P_\mu(x) \geq \lambda_1(x-x_1)$ we have $\Lambda_1 \geq \lambda_1$. The inverse $\Upsilon^{-1}$ can be defined on $[0,\Lambda_1]$, but we are only interested in $\Upsilon^{-1}$ over the interval $[0,\lambda_1]$. Using $\Upsilon^{-1}$ and the construction of the previous section we can define $S=\alpha \circ \Upsilon^{-1}:(0,\lambda_1] \mapsto [x_1,\infty)$  and $R=\beta \circ \Upsilon^{-1}:(0,\lambda_1] \mapsto (-\infty,x_1]$ with $S$ increasing and $R$ decreasing.

\begin{figure}[H]

\centering
\begin{tikzpicture}[scale=1,
declare function={	
diag(\x)=\x;
k1=5;
k1x=1.5;
x2=3.3;
x3=6;
x4=9;
P1(\x)=(\x-k1)*(\x>k1);
    	a=0.05;
	c1=1;
	c2=0.2;
	quad1(\x)=a*(\x^2)+c1;
	quad2(\x)=a*(\x^2)+c2;
	slope=0.02;
	xs(\x)=\x/(2*a);
	my=(k1x-xs(slope))*slope+quad2(xs(slope));
	d=4*k1x*k1x*a*a-4*a*(my-c2);
	xr=k1x+sqrt(d)/(2*a);
	d1=4*k1x*k1x*a*a+4*a*c2;
	xr1=k1x+sqrt(d1)/(2*a);
}]

               \draw[name path=payoff, black] plot[ domain=0:10, samples=100] (\x,{P1(\x)});

               \node (p1)[right,black] at (10,{quad2(10)}) {$P_\nu(k)$};
                \draw[name path=payoff, black] plot[ domain=0:10, samples=100] (\x,{quad2(\x)});

                \draw[name path=tan, red, thick] ({xs(slope)},{quad2(xs(slope))}) -- (k1x,my);
                \draw[name path=tan, red, thick] (k1x,my) -- (xr,{quad2(xr)});

             \draw[red, dashed] ({xs(slope)},-1) -- ({xs(slope)},{quad2(xs(slope))});
             \node[below] at ({xs(slope)},-1) {$\beta(p)$};

              \draw[red, dashed] ({xr},-1) -- (xr,{quad2(xr)});
             \node[below] at (xr,-1) {$\alpha(p)$};

             \draw[gray] (k1x,0) -- (k1x,{quad2(k1x)});
             \node[below, black!30!green] at (k1x,0) {$x_1$};

             \draw[gray] (k1x,0) -- (xr1,{quad2(xr1)});
             \draw[latex'-,gray] (xr+0.45,0.6)  to[out=300, in=180] (k1+3,0.2);
             \node[right,gray] at (k1+3,0.2) {$\text{slope }\Lambda_1$};

             \draw[black!30!green, thick] (k1x,0) -- (x2,0.11) -- (x3,1.5)  -- (x4,{P1(x4)});
             \node[below, black!30!green] at (x4+0.4,{P1(x4)}) {$P_\mu(k)$};

             \draw[latex'-, black!30!green] (k1x+0.6,-.1)  to[out=300, in=180] (k1+1,-0.5);
             \node[right, black!30!green] at (k1+1,-0.5) {$\text{slope }\lambda_1$};

\end{tikzpicture}
\caption{Calculation of $\alpha$, $\beta$, $a$ and $b$ in this case}
\label{fig:x1}
\end{figure}
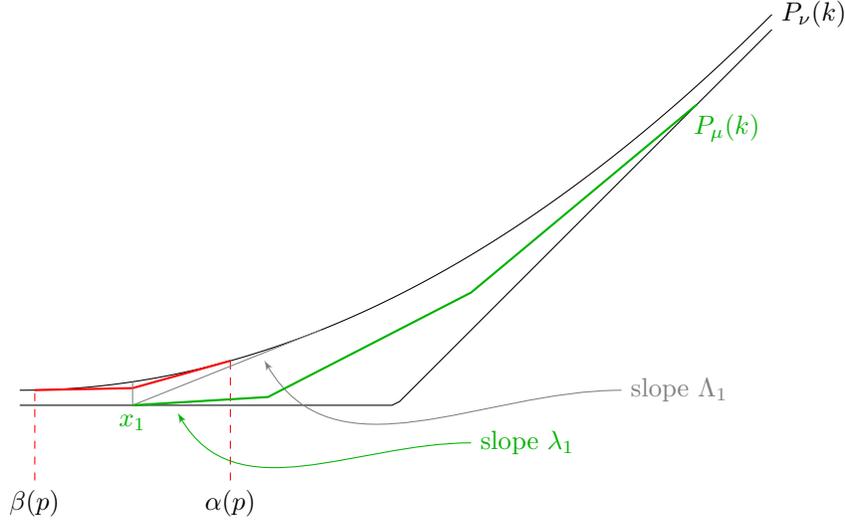

By the final comments in Remark~\ref{rem:partway}, the construction of $R$ and $S$ on $(0,\lambda_1]$ is such that if $Y$ is constructed as in
\eqref{eq:YUVdef}, then on $U \leq \lambda_1$ we find $Y$ has law $\nu_{\lambda_1}$, where $\nu_{\lambda_1} = \nu$ on $(R(\lambda_1),S(\lambda_1))$ and $\nu_{\lambda_1} \leq \nu$ on $\{ R(\lambda_1),S(\lambda_1) \}$.

We now claim that $\tilde{\mu}_1 := \mu - \lambda_1 \delta_{x_1} = \sum_{i=2}^N \lambda_i \delta_{x_i}$ and $\tilde{\nu}_1 = \nu - \nu_{\lambda_1}$ are in convex order. By construction $\nu_{\lambda_1}$ has mass $\lambda_1$ and barycentre $x_1$. Hence $\tilde{\mu}_1$ and $\tilde{\nu}_1$ also have the same total mass and barycentre.
\begin{lem}  $\tilde{\mu}_1 \leq_{cx} \tilde{\nu}_1$.
\label{lem:cx}
\end{lem}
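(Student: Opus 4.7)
The plan is to use the standard characterisation of convex order: for two integrable measures on $\R$ having the same total mass and the same barycentre, $\tilde{\mu}_1 \leq_{cx} \tilde{\nu}_1$ is equivalent to the pointwise put-function inequality $P_{\tilde{\mu}_1}(k) \leq P_{\tilde{\nu}_1}(k)$ for every $k \in \R$. The equal-mass and equal-barycentre conditions are already checked in the paragraph immediately preceding the lemma (since $\nu_{\lambda_1}$ has mass $\lambda_1$ and barycentre $x_1$, exactly like the removed atom $\lambda_1\delta_{x_1}$), so the entire content of the lemma reduces to proving
\[ P_\mu(k) - \lambda_1(k-x_1)^+ \;\leq\; P_\nu(k) - P_{\nu_{\lambda_1}}(k), \qquad k \in \R. \]

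The key structural observation I would exploit is that, because $\nu_{\lambda_1}$ coincides with $\nu$ on the open interval $(r,s):=(R(\lambda_1),S(\lambda_1))$ and vanishes outside $[r,s]$, the residual measure $\tilde{\nu}_1 = \nu - \nu_{\lambda_1}$ puts no mass inside $(r,s)$; hence $P_{\tilde{\nu}_1}$ is affine on $[r,s]$. Writing $p := \Upsilon^{-1}(\lambda_1)$, the tangent identities $P_\nu(r) = p - b(p)(x_1-r)$ and $P_\nu(s) = p + a(p)(s-x_1)$ combined with the defining relation $a(p) - b(p) = \Upsilon(p) = \lambda_1$ let me compute the endpoint values $P_{\tilde{\nu}_1}(r) = P_\nu(r)$ and $P_{\tilde{\nu}_1}(s) = P_\nu(s) - \lambda_1(s-x_1)$, and to recognise that both lie on the single affine function
\[ L(k) \;:=\; p + b(p)(k-x_1), \]
namely the lower tangent line to $P_\nu$ which passes through $(x_1,p)$ and touches $P_\nu$ at $r$. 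Consequently $P_{\tilde{\nu}_1} = L$ throughout $[r,s]$.

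With this identification I then verify the put-function inequality on the three regions $\{k \leq r\}$, $\{k \in [r,s]\}$ and $\{k \geq s\}$. On $\{k \leq r\}$ (where $r \leq x_1$), $P_{\tilde{\mu}_1}(k) = P_\mu(k) = 0$ because $x_1$ is the smallest atom of $\mu$, while $P_{\tilde{\nu}_1}(k) = P_\nu(k) \geq 0$. On $\{k \geq s\}$, $P_{\nu_{\lambda_1}}(k) = \lambda_1(k-x_1)$ so the inequality collapses to $P_\mu \leq P_\nu$, which holds by hypothesis. The only substantive region is the middle one: there $P_{\tilde{\nu}_1} = L$ is affine, whereas $P_{\tilde{\mu}_1}$ is \emph{convex} (being the put function of the positive measure $\tilde{\mu}_1 = \sum_{i\geq 2}\lambda_i\delta_{x_i}$), so $L - P_{\tilde{\mu}_1}$ is concave on $[r,s]$ and attains its minimum at one of the endpoints. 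At $k = r$ the difference equals $P_\nu(r) - 0 \geq 0$, and at $k = s$ it equals $P_\nu(s) - P_\mu(s) \geq 0$, so concavity pushes nonnegativity across the whole interval.

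The step I expect to be the main obstacle is the explicit identification $P_{\tilde{\nu}_1}|_{[r,s]} = L$: it is precisely the relation $a(p) - \lambda_1 = b(p)$ that forces the two endpoint values of $P_{\tilde{\nu}_1}$ to lie on a single affine line rather than being two unrelated numbers, and it is this $L$-identification that unlocks the concavity-of-affine-minus-convex argument for the middle region. Once $L$ has been pinned down, the three-region check above is essentially a one-line endpoint comparison on each piece.
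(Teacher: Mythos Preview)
Your proof is correct and follows essentially the same route as the paper: both arguments reduce to observing that the residual target measure places no mass on $(R(\lambda_1),S(\lambda_1))$, so that the relevant put function is affine there, and then comparing this affine function against the convex put function of the initial law at the endpoints $R(\lambda_1)$ and $S(\lambda_1)$ (where the comparison reduces to $0\leq P_\nu(R(\lambda_1))$ and $P_\mu(S(\lambda_1))\leq P_\nu(S(\lambda_1))$ respectively). The only cosmetic difference is that the paper first reinstates the atom, defining $\hat\nu:=\lambda_1\delta_{x_1}+\tilde\nu_1$, so that $P_{\hat\nu}$ is piecewise linear on the two subintervals $[R(\lambda_1),x_1]$ and $[x_1,S(\lambda_1)]$ and the comparison $P_\mu\leq P_{\hat\nu}$ is checked at the three points $R(\lambda_1)$, $x_1$, $S(\lambda_1)$; you instead work directly with $P_{\tilde\nu_1}$ and use the relation $a(p)-b(p)=\lambda_1$ to identify it explicitly with the single tangent line $L(k)=p+b(p)(k-x_1)$ on all of $[R(\lambda_1),S(\lambda_1)]$, needing only the two endpoints.
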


\begin{proof}
Let $\hat{\nu} = \lambda_1 \delta_{x_1} + \tilde{\nu}_1$. Since $\lambda_1 \delta_{x_1} \leq_{cx} \nu_{\lambda_1}$ we have $\hat{\nu} \leq_{cx} \nu$.
Also $P_\mu(k) \leq P_{\hat{\nu}}(k)$. To see this note that $P_{\hat{\nu}}$ is continuous everywhere and linear on intervals $[R(\lambda_1),x_1]$ and $[x_1, S(\lambda_1)]$, whereas $P_\mu$ is continuous and convex on $[R(\lambda_1),S(\lambda_1)]$. Moreover, $P_\mu(R(\lambda_1))=0 \leq P_{\hat{\nu}}(R(\lambda_1))$, $P_\mu(x_1)=0 \leq P_{\hat{\nu}}(R(x_1))$ and
$P_\mu(S(\lambda_1)) \leq P_{\nu}(S(\lambda_1)) = P_{\hat{\nu}}(S(\lambda_1))$.
Hence $P_{\tilde{\mu}}(k) + \lambda_1(x_1-k)^+ = P_{\mu}(k) \leq P_{\hat{\nu}}(k) = P_{\tilde{\nu}_1}(k) + \lambda_1(x_1-k)^+$ and it follows that
$P_{\tilde{\mu}}(k) \leq P_{\tilde{\nu}_1}(k)$ as required. 
\end{proof}

We have constructed $(R,S)$ on $(0,\lambda_1]$ with $S$ increasing and $R$ decreasing in such a way that the point mass at $x_1$ is mapped to $\nu_{\lambda_1}$. It remains to embed $\tilde{\nu}_1$ starting from $\tilde{\mu}_1$. Note that by Remark~\ref{rem:partway}, $\tilde{\nu}_1$ places no mass on $(R(\lambda_1), S(\lambda_1))$.

As a next step we embed the atom $\lambda_2 \delta_{x_2}$ of $\tilde{\mu}_1$ in $\tilde{\nu}_1$. $x_2$ is the lowest location of an atom in $\tilde{\mu}_1$ so we can use the same algorithm as before. In this way, for $\lambda_1 < u \leq \lambda_1+\lambda_2$ we construct $S$ increasing with $S(\lambda_1 +) \geq S(\lambda_1-) \vee x_2$ and $R$ decreasing with $R(\lambda_1+) \leq x_2$. By Remark~\ref{rem:jump}, $R$ jumps over the interval $(R(\lambda_1), S(\lambda_1))$. We conclude  that for $0<u<v<\lambda_1+\lambda_2$, $R(v) \notin (R(u),S(u))$.

Thereafter, we proceed inductively on the number of atoms which have been embedded.
The initial law is a sub-probability $\tilde{\mu}_k = \sum_{k+1}^N \lambda_i \delta_{x_i}$ which we want to map to a target law $\tilde{\nu}_k$ where $\tilde{\mu}_k \leq_{cx} \tilde{\nu}_k$ and $\tilde{\nu}_k \leq \nu$.
Since $\mu$ consists of a finite number of atoms the construction terminates. Moreover the random variable $Y$ we construct in this way has law $\nu$ and $R$ and $S$ have the properties in \eqref{eq:RSproperties}. It follows that we have proved Theorem~\ref{thm:atoms} in the case where $\mu$ consists of a finite number of atoms.

\subsection{The martingale coupling and its inverse as maps}
Given $\nu$ centred, (and $\mu = \delta_0$) we saw in Section~\ref{ssec:pointmass} how to construct $R:(0,1) \mapsto \R_-$ and $S:(0,1) \mapsto \R_+$ such that $Y=Y(U,V)$ has law $\nu$ where $Y$ is given by $Y(u,v)=0$ if $S(u)= 0$ and
\begin{equation} Y(u,v) = R(u) I_{ \{ v \leq \frac{S(u)}{S(u)-R(u)} \} } + S(u)I_{ \{ v > \frac{S(u)}{S(u)-R(u)} \} } \label{eq:Ydefptmass} \end{equation}
otherwise.

Let $\sP^0(\R)$ denote the set of centred probability measures on $\R$.
Let $\sV^1$ denote the set of pairs of functions $R,S$ with $R:(0,1) \rightarrow \R_-$ and $S:(0,1) \rightarrow \R_+$, let $\sV^1_{Mon}$ denote the subset of $\sV^1$ for which $R$ is decreasing and $S$ is increasing, and let $\sV^1_{Int}$ denote the subset of $\sV^1$ such that $I(R,S)<\infty$ where
\[ I(f,g) = \int_0^1 du \frac{|f(u)|g(u)}{g(u)-f(u)} I_{ \{g(u) > 0 \} }. \]
Finally, let $\sV^1_{Mon,Int} = \sV^1_{Mon} \cap \sV^1_{Int}$.

The construction in Section~\ref{ssec:pointmass} can be considered as a pair of maps
\begin{eqnarray*}
 & \sQ^1  :  \sP^0(\R) \mapsto \sV^1_{Mon,Int} & \\
 & \sR^1  :  \sV^1_{Mon,Int} \mapsto \sP^0(\R) &
\end{eqnarray*}
Note that $\E[|Y|] = 2I(R,S)$ which can be shown using the ideas in the proof of Lemma~\ref{lem:test} to be equal to $2P_{\nu}(0)$. Moreover, under $I(R,S)<\infty$ we have $\E[Y]=0$.

Note that if we take $(R,S) \in \sV^1_{Mon} \setminus \sV^1_{Mon,Int}$ then we can still define $Y$ via \eqref{eq:Ydefptmass} but $\sL(Y)$ will not be integrable. Then $M$ given by $M_1=0$, $M_2 = Y$ is a local martingale, but not a martingale.

Section~\ref{ssec:finiteatoms} extends these results from initial laws which consist of a single atom to finite combinations of atoms. Let $\sP^0_F(\R)$ be the subset of $\sP^0(\R)$ for which the measure consists of a finite set of atoms and let $\sC_F = \{ (\zeta, \chi) : \zeta \in \sP^0_F(\R), \chi \in \sP^0(\R) ; \zeta \leq_{cx} \chi \}$. Let
\begin{eqnarray*}
 \sV = \{ (R,G,S) ; R:(0,1) \rightarrow \R, G:(0,1) \rightarrow \R, S:(0,1) \rightarrow \R; R(u) \leq G(u) \leq S(u);  & \; & \\
  \int_0^1 |G(u)| du < \infty, \int_0^1 G(u) du = 0 \} & . &
\end{eqnarray*}
Consider now the subsets
\begin{eqnarray*}
\sV_F & = & \{ (R,G,S) \in \sV : \mbox{$G$ non-decreasing and takes only finitely many values} \}, \\
\sV_{Mon} & = & \{ (R,G,S) \in \sV : \mbox{\eqref{eq:RSproperties} holds} \}, \\
\sV_{Int} & = & \{ (R,G,S) \in \sV : I(R,G,S) < \infty \};
\end{eqnarray*}
where $I(R,G,S) = \int_0^1 du \frac{(S(u)-G(u))(G(u)-R(u))}{S(u)-R(u)} I_{ \{S(u) > G(u) \} }$, and consider also intersections of these subsets, for example $
\sV_{Mon,Int} = \sV_{Mon} \cap \sV_{Int}$. In Section~\ref{ssec:finiteatoms} we constructed a map $Q: \sC_F \rightarrow \sV_{F, Mon}$ which we write as
$Q(\zeta,\chi) = (R_{(\zeta,\chi)}, G_\zeta, S_{(\zeta,\chi)})$. Indeed, since $\chi \in L^1$ and since
$\E[|Y-X|] \leq \E[|X|] + \E[|Y|] < \infty$ we have that
$\E[|Y-X|] = 2 I (R_{(\zeta,\chi)}, G_\zeta, S_{(\zeta,\chi)})$, so that we actually have a map $Q: \sC_F \rightarrow \sV_{F,Mon,Int}$. Conversely, the arguments after Lemma~\ref{lem:cx}  show that \eqref{eq:YUVdef} defines a inverse map $\sR:\sV_{F,Mon,Int} \rightarrow \sC_F$.

Note that given any element $(R,G,S)$ of $\sV$ we can define the map $\sR: \sV \rightarrow \sP^0(\R) \times \sP(\R)$ via $\sR(R,G,S) = (\sL(X(U)),\sL(Y(U,V)))$ where $Y(U,V)$ is as given in the statement of Theorem~\ref{thm:atoms}. We will make no further use of this idea, but different properties of $(R,G,S)$ will lead to different (local)-martingale couplings. The embedding of Hobson and Neuberger~\cite{HobsonNeuberger:12} is of this type. In the Hobson and Neuberger embedding $R$ and $S$ are both increasing.

\subsection{The case of general integrable $\mu$}
We assume $\mu$ is centred at zero, but the general case follows by translation.



Our goal in this section is to extend the map $\sQ : \sC_F \rightarrow \sV_{F,Mon,Int}$ with inverse $\sR$ to a map $\sQ : \sC \rightarrow \sV_{Mon,Int}$ where $\sC = \{ (\zeta, \chi): \zeta \in \sP^0(\R), \chi \in \sP^0(\R); \zeta \leq_{cx} \chi \}$.
For $\mu$ a general centred probability measure and $\nu$ a centred target measure with $\mu \leq_{cx} \nu$ we construct a sequence $(\mu_n)_{n \geq 1}$ of approximations of $\mu$ by elements of $\sP^0_F(\R)$. For each $\mu_n$ we can construct a triple $(R_n,G_n,S_n)$. We show that $(R_n,G_n,S_n)_{n \geq 1}$ converge to a limit $(R,G,S)$ first on the rationals and then (almost surely) on $(0,1)$.
Convergence of $G_n$ and $S_n$ is straightforward, but convergence of $R_n$ is more subtle, and indeed we only have convergence on $\{u: S(u)>G(u) \}$. Finally we show that $\sR(R,G,S) =(\mu, \nu)$ so that the trio $(R,G,S)$ defines a martingale coupling between $\mu$ and $\nu$.

Let $\{q_1, q_2 \ldots \}$ be an enumeration of $\Q \cap (0,1)$. Then $\{ S_{n}(q_1) \}_{n \geq 1}$ converges down a subsequence $n_{k_1}$ to a limit $S_\infty(q_1) := \lim_{k_1 \uparrow \infty} S_{n_{k_1}}(q_1)$. Down a further subsequence if necessary we have that $S_{n_{k_2}}(q_2)$ converges to $S_\infty (q_2)$. Proceeding inductively, we have by a diagonal argument (see, for example, Billingsley~\cite{Billingsley:2013}) that there is a subsequence $(m_1, m_2, \ldots)$ such that $\{ S_{m_k} \}_{k \geq 1}$ converges to $S_\infty$ at every rational $q \in \Q \cap (0,1)$. This limit is non-decreasing.

Our first result shows that any limit of $S_n$ is finite valued. Since the ideas behind the proof are not relevant to the arguments of this section the proof is postponed to  Appendix~\ref{app:proofs}.

\begin{lem}
Let $\mu_n \uparrow_{cx} \mu$. 
Then $\limsup S_n(u) \leq J(u)$ for some function $J=J_{\mu,\nu}:(0,1) \mapsto (-\infty,\infty)$.
\label{lem:boundS}
\end{lem}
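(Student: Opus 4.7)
The plan is to exploit the mass- and mean-balance identities \eqref{eq:mass}--\eqref{eq:mean} which, by the construction of Section~\ref{ssec:finiteatoms}, $(R_n,G_n,S_n)$ satisfies for each finitely-supported $\mu_n$. Specifically, for every $n$ and every $u\in(0,1)$ there is an intermediate sub-measure $\nu_{n,u}\leq\nu$ of total mass $u$, supported on $[R_n(u),S_n(u)]$, with barycentre $m_n(u)/u$, where $m_n(u):=\int_0^u G_{\mu_n}(v)\,dv$; moreover $\nu_{n,u}$ agrees with $\nu$ on $(R_n(u),S_n(u))$ and differs from $\nu$ only through atomic corrections at the two endpoints.

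First I would verify that $m_n(u)\to m(u):=\int_0^u G_\mu(v)\,dv$ at every continuity point of $G_\mu$. This follows from $\mu_n\uparrow_{cx}\mu$ (equivalently $U_{\mu_n}\downarrow U_\mu$, as used in Lemma~\ref{lem:approx}) together with the expression of $\int_0^u G_\eta(v)\,dv$ in terms of $P_\eta$. Note also that $u\mapsto m(u)$ is convex on $[0,1]$ with $m(0)=m(1)=0$, hence $m(u)\leq 0$ with strict inequality for every $u\in(0,1)$ whenever $\mu\neq\delta_0$; the point-mass case $\mu=\delta_0$ has already been treated in Section~\ref{ssec:pointmass}.

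The heart of the argument is a contradiction. Suppose $S_{n_k}(u)\to\infty$ along a subsequence. The mass constraint $\nu_{n_k,u}(\R)=u$ then forces $R_{n_k}(u)\to G_\nu(1-u+)$, a value strictly above $\ell_\nu$ whenever $u<1-\nu(\{\ell_\nu\})$. Passing to the limit in \eqref{eq:mean} yields
\[\int_{[G_\nu(1-u+),\infty)} z\,\nu(dz) = m(u).\]
Using $\bar\nu=0$, the left-hand side equals $-\int_{(-\infty,G_\nu(1-u+))} z\,\nu(dz)$ and is nonnegative, whereas the right-hand side is strictly negative by the preceding paragraph, a contradiction. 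The remaining range $u\geq 1-\nu(\{\ell_\nu\})$ is handled by the same limit identity, the left-hand side then being $\bar\nu=0>m(u)$.

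To produce the explicit bound $J$, one may define $J(u)$ as the supremum over all $s\in\R$ admitting an $r\leq s$ such that a sub-measure of $\nu$ lying in $[r,s]$ can have total mass $u$ and barycentre $m(u)/u$; the above argument shows $J(u)<\infty$ for every $u\in(0,1)$, and the finite-$n$ versions of \eqref{eq:mass}--\eqref{eq:mean} together with $m_n(u)\to m(u)$ deliver $S_n(u)\leq J(u)+o(1)$. The main technical obstacle will be a careful bookkeeping of the boundary-atom corrections $\overline\lambda^\mu_u,\underline\lambda^\nu_u,\overline\lambda^\nu_u$ in \eqref{eq:mass}--\eqref{eq:mean}, which obscure the clean equalities used above and are presumably the reason the proof is deferred to the appendix.
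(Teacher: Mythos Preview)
Your approach is quite different from the paper's, which works geometrically with tangent lines to $P_\mu$ and $P_\nu$ and defines $J(u)$ explicitly as the abscissa where the tangent to $P_\nu$ through $(G_\mu(u),P_\mu(G_\mu(u)))$ with slope $>u$ touches $P_\nu$; the bound $S_n(u)\le J_+(u)$ is then shown to be stable under $\mu_n\uparrow_{cx}\mu$ via $P_{\mu_n}\uparrow P_\mu$ and right-continuity of an auxiliary function $K$.

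There is, however, a genuine gap in your measure-theoretic route. You claim that for each $n$ and $u$ there is a sub-measure $\nu_{n,u}\le\nu$, supported on $[R_n(u),S_n(u)]$, agreeing with $\nu$ on the open interval, with total mass $u$ and barycentre $m_n(u)/u$. These four properties cannot in general be satisfied simultaneously when $\mu_n$ has more than one atom. Two distinct objects are being conflated:
\begin{itemize}
\item The \emph{cumulative} embedded measure $\sL(Y\,;\,U\le u)$ does have mass $u$ and barycentre $m_n(u)/u$, but it is only supported on $(-\infty,S_n(u)]$, not on $[R_n(u),S_n(u)]$. Concretely, take $\mu_n=\lambda_1\delta_{x_1}+\lambda_2\delta_{x_2}$ with $x_2>S_n(\lambda_1)$. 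For $u$ just above $\lambda_1$ the second atom begins embedding with $R_n(u)$ close to $x_2>S_n(\lambda_1)$, while the mass already embedded from the first atom sits in $[R_n(\lambda_1),S_n(\lambda_1)]$, strictly below $R_n(u)$.
\item The measure implicit in \eqref{eq:mass}--\eqref{eq:mean}, namely $\nu\lvert_{(R_n(u),S_n(u))}+\underline\lambda^\nu_u\delta_{R_n(u)}+\overline\lambda^\nu_u\delta_{S_n(u)}$, \emph{is} supported on $[R_n(u),S_n(u)]$, but its mass is $\mu_n((R_n(u),G_n(u)))+\overline\lambda^\mu_u=u-\mu_n((-\infty,R_n(u)])$, which is strictly less than $u$ whenever $R_n(u)$ lies above some atom of $\mu_n$ (as in the example above). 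Its barycentre is therefore not $m_n(u)/u$.
\end{itemize}
Because of this, the step ``if $S_{n_k}(u)\to\infty$ then the mass constraint forces $R_{n_k}(u)\to G_\nu(1-u+)$'' does not follow: the mass in \eqref{eq:mass} need not be $u$, so you cannot pin down $R_{n_k}(u)$; and for the cumulative measure you have no lower bound on the support in terms of $R_n(u)$. Consequently the limiting identity $\int_{[G_\nu(1-u+),\infty)} z\,\nu(dz)=m(u)$ is unjustified, and the contradiction argument collapses.

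Your intuition that a barycentre inequality should prevent $S_n(u)\to\infty$ is sound, but making it work requires either controlling $\mu_n((-\infty,R_n(u)])$ or bypassing $R_n$ entirely --- which is effectively what the paper's tangent-line argument does, by working directly with $P_{\mu_n}$ and $P_\nu$ at the level of convex functions rather than at the level of the intervals $[R_n(u),S_n(u)]$.
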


We want to extend the domain from the rationals to $(0,1)$. To this end define $S(u) = \lim_{q_j \uparrow u} S_\infty(q_j)$. This limit is well defined (and non-decreasing) by the monotonicity of $S_\infty$. Then from the monotonicity of $S$ we conclude that $S$ has only countably many discontinuities. Note that, by definition, $S$ is left-continuous.

We can construct $G$ from $\{ G_n \}$ in an identical fashion. In this case the finiteness of the limit follows from the tightness of the singleton $\{ G \}$.
Moreover, since $G_n \leq S_n$ by construction, we have $G_\infty \leq S_\infty$ and $G \leq S$. Again, the increasing limit $G$ has at most countably many discontinuities and is left-continuous.

Define $\sN_S = \{ u : S_n(u) \not\rightarrow S(u) \}$ and $\sN_G = \{ u : G_n(u) \not\rightarrow G(u) \}$
where the subscript $n$ refers to a subsequence down which $S_n$ and $G_n$ converge on rationals. 
Define also $\sN^\Delta_S = \{ u : S(u+) > S(u-) \}$ and $\sN^\Delta_G = \{ u : G(+) > G(u-) \}$. 

\begin{lem}
$\sN_S \subseteq \sN^\Delta_S$ and $\sN_G \subseteq \sN^\Delta_G$.
Moreover, $\Leb(\sN_S \cup \sN_G) = 0$.
\label{lem:convergeSG}
\end{lem}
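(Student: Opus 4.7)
The plan is the standard monotone-sandwich argument: because $S_n$ and $G_n$ are non-decreasing and converge (along a subsequence) on the dense set $\Q \cap (0,1)$, we can trap each $S_n(u)$ between values at nearby rationals and pass to the limit. This delivers convergence at every point of continuity of the limiting monotone function, and the measure-zero statement is then immediate from the countability of the jump set of a monotone function.

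In more detail, each $S_n$ is non-decreasing by property \eqref{eq:RSproperties}, and each $G_n=G_{\mu_n}$ is a quantile function, hence also non-decreasing. Fix $u\in(0,1)$ and rationals $q_1<u<q_2$. Monotonicity yields $S_n(q_1)\le S_n(u)\le S_n(q_2)$, and sending $n\to\infty$ along the subsequence on which rational-point convergence holds gives
\[ S_\infty(q_1) \;\leq\; \liminf_n S_n(u) \;\leq\; \limsup_n S_n(u) \;\leq\; S_\infty(q_2). \]
Taking the supremum over rational $q_1<u$ and the infimum over rational $q_2>u$, and using both the monotonicity of $S_\infty$ on $\Q$ and the defining relation $S(u)=\lim_{q\uparrow u,\,q\in\Q}S_\infty(q)$, identifies these bounds with the one-sided limits $S(u-)$ and $S(u+)$, so
\[ S(u-) \;\leq\; \liminf_n S_n(u) \;\leq\; \limsup_n S_n(u) \;\leq\; S(u+). \]
Since $S$ is left-continuous by construction, $S(u-)=S(u)$. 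Whenever $u\notin\sN^\Delta_S$, the sandwich collapses and $\lim_n S_n(u)=S(u)$, proving $\sN_S\subseteq\sN^\Delta_S$. The identical argument (with $S_n,S$ replaced by $G_n,G$) gives $\sN_G\subseteq\sN^\Delta_G$.

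For the final claim, $S$ and $G$ are both non-decreasing on $(0,1)$, so each has at most countably many discontinuities. Hence $\sN^\Delta_S\cup\sN^\Delta_G$ is countable, and by the inclusion just established so is $\sN_S\cup\sN_G$, which therefore has Lebesgue measure zero. The only piece of bookkeeping that needs care is to match the definition of $S$ as the left-continuous extension of $S_\infty$ from the rationals with its one-sided limits $S(u\pm)$; this identification is straightforward from monotonicity of $S_\infty$ and density of $\Q$, and I do not foresee any serious obstacle.
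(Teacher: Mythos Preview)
Your proof is correct and follows essentially the same approach as the paper: both exploit the monotonicity of $S_n$ (and $G_n$) to sandwich $S_n(u)$ between values at rationals on either side of $u$, then pass to the limit along the chosen subsequence to conclude convergence at every continuity point of $S$. The paper phrases this as a contradiction argument (assuming a subsequence with $S_{n_j}(u)>S(u)+\epsilon$ and reaching an inconsistency), whereas you give the direct $\liminf/\limsup$ sandwich; the content is identical, and your presentation is if anything slightly cleaner.
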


\begin{proof} 

Suppose $u$ is a continuity point of $S$. Suppose further that there is a subsequence $(n_j)_{j \geq 1}$ along which $S_{n_j}(u) > S(u)+\epsilon$. Using the continuity of $S$ at $u$ we may pick $q>u$ such that $S(q) < S(u) + \epsilon/2$. Take $q_k \in (u,q)$ with $q_k \downarrow u$. Then $S_{n_j}(q_k) \geq S_{n_j}(u) > S(u)+\epsilon > S(q_k) + \epsilon/2$. Letting $j \uparrow \infty$, $S_\infty(q_k) > S(q_k) + \epsilon/2$. Letting $k \uparrow \infty$, $S(u) \geq S(u) + \epsilon/2$ which is a contradiction.

A similar argument (without the need of continuity at $u$) shows that down any subsequence $\lim_j S_{n_j}(u) > S(u)-\epsilon$. Hence, if $S(u)=S(u+)$ then $S(u) = \lim S_{n}(u)$. Since the set of points for which $S(u+)>S(u)$ is countable we conclude that $\Leb(\sN_S)=0$.

An identical argument gives that $G(u) = \lim_n G_{n}(u)$ on $G(u+)=G(u)$ and $\Leb(\sN_G)=0$.
\end{proof}

Now consider $(R_n)_{n \geq 1}$ and the existence of a possible limit $R$. 
By the same diagonal argument as above we can define $R_\infty : \Q \cap (0,1) \rightarrow \R$ such that on a subsequence $R_{n_k}(q) \rightarrow R_\infty(q) \in [-\infty,\infty]$ for every $q$. (From now on we work on a subsequence indexed $n$ such that $\{ S_n \}_n$, $\{ G_n \}_n$ and $\{ R_n \}_n$ converge for every $q \in \Q \cap (0,1)$.) We want to construct $R$ from $R_\infty$, but unlike in the case of $S$ or $G$ we do not have monotonicity. Note that for $q'>q$ we have $R_n(q') \notin (R_n(q),S_n(q))$ for each $n$ and this implies $R_\infty(q') \notin (R_\infty(q),S_\infty(q))$.

The following lemma shows that $R_\infty$ is finite valued, at least for $q$ such that $G(u+)<S(u)$.

\begin{lem}
Let $\mu_n \uparrow_{cx} \mu$. 
Then $\liminf R_n(u) \geq j(u)$ on $G(u+)<S(u)$ for some function $j=j_{\mu,\nu}:(0,1) \mapsto (-\infty,\infty)$.
\label{lem:boundR}
\end{lem}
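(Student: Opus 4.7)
The approach is to exploit the mass--mean bookkeeping of the image measure $\eta_n^u$---i.e.\ the image, under the left-curtain coupling $\pi_{lc}^n$, of the first $u$ fraction of mass of $\mu_n$---together with the non-negativity of the atom it places at $R_n(u)$.

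\textbf{Step 1 (An explicit inequality for $R_n(u)$).} By the finite-atom construction of Section~\ref{ssec:finiteatoms}, $\eta_n^u$ equals $\nu$ on $(R_n(u),S_n(u))$ plus atoms $\lambda_L^n\in[0,\nu(\{R_n(u)\})]$ at $R_n(u)$ and $\lambda_R^n\in[0,\nu(\{S_n(u)\})]$ at $S_n(u)$, and satisfies mass--mean conditions analogous to \eqref{eq:mass}--\eqref{eq:mean}, namely $\eta_n^u(\R)=u$ and $\int z\,\eta_n^u(dz)=\int_0^u G_n(v)\,dv$. Solving the two linear equations for $\lambda_L^n$ and imposing $\lambda_L^n\ge 0$ yields
\begin{equation*}
H_{S_n(u)}(R_n(u)) \;\geq\; P_\nu(S_n(u)) - u\bigl(S_n(u)-\bar G_n(u)\bigr), \qquad \bar G_n(u):=\tfrac{1}{u}\int_0^u G_n(v)\,dv,
\end{equation*}
where $H_s(r):=\int_{-\infty}^r(s-z)\,\nu(dz)=P_\nu(r)+(s-r)F_\nu(r)$ is non-decreasing in $r$ and tends to $0$ as $r\downarrow-\infty$.

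\textbf{Step 2 (Strict positivity of the right-hand side).} Set $\Psi(u):=P_\nu(S(u))-u(S(u)-\bar G(u))$ with $\bar G(u)=\frac{1}{u}\int_0^u G(v)\,dv$. Because the convex function $k\mapsto P_\nu(k)-uk$ attains its minimum $-\int_0^u G_\nu(v)\,dv$ on $[G_\nu(u),G_\nu(u+)]$, and because $u\bar G(u)=\int_0^u G(v)\,dv\ge\int_0^u G_\nu(v)\,dv$ (the integrated-quantile form of $\mu\le_{cx}\nu$), one immediately gets $\Psi(u)\ge 0$, with equality if and only if both (i) $S(u)\in[G_\nu(u),G_\nu(u+)]$ and (ii) $\int_0^u G(v)\,dv=\int_0^u G_\nu(v)\,dv$. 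Under (ii), the non-negative function $\phi(t):=\int_0^t(G(v)-G_\nu(v))\,dv$ vanishes at $t=0,u,1$, so its right-derivative $\phi'(u+)=G(u+)-G_\nu(u+)$ must be non-negative; combined with (i) this forces $G(u+)\ge G_\nu(u+)\ge S(u)$, contradicting the hypothesis $G(u+)<S(u)$. Hence $\Psi(u)>0$ on the good set.

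\textbf{Step 3 (Definition of $j$ and passage to the limit).} Define $j(u):=\inf\{r:\,H_{S(u)}(r)\ge\Psi(u)\}$ on $\{G(u+)<S(u)\}$ and $j(u):=\ell_\nu$ elsewhere. Since $\Psi(u)>0$ and $H_{S(u)}(r)\downarrow 0$ as $r\downarrow-\infty$, we have $j(u)>-\infty$. Convex order $\mu_n\le_{cx}\mu$ gives uniform $L^1$-control $\sup_n\int|G_n(v)|\,dv\le\int|x|\mu(dx)+|\bar\mu|$, so Vitali's theorem yields $\bar G_n(u)\to\bar G(u)$ on a conull set; combined with $S_n\to S$ off $\sN_S$ (Lemma~\ref{lem:convergeSG}) and joint continuity of $(s,r)\mapsto H_s(r)$ outside the at-most-countable set of atoms of $\nu$, the Step~1 inequality passes to the limit as $H_{S(u)}\bigl(\liminf_n R_n(u)\bigr)\ge\Psi(u)$, giving $\liminf_n R_n(u)\ge j(u)$ off a further Lebesgue-null set.

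\textbf{Main obstacle.} The hard part is Step~2: translating the simultaneous saturation of two a priori independent inequalities---the convex-order quantile inequality and the sub-gradient inequality for $P_\nu(\cdot)-u\cdot$---into the structural consequence $G(u+)\ge S(u)$. Once this geometric fact is pinned down, Step~1 is routine linear-algebra on the atom weights of $\eta_n^u$, and Step~3 is a standard limit passage paralleling the proof of Lemma~\ref{lem:boundS}.
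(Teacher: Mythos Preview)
Your approach is correct but genuinely different from the paper's. The paper argues geometrically in the spirit of its Lemma~\ref{lem:boundS}: for fixed $u$ it considers the tangent $\ell_1$ to $P_\mu$ of slope $u$, shows that under the hypothesis $G(u+)<S(u)$ this line cannot be tangent to $P_\nu$ (equivalently, in your notation, $\phi(u)>0$), and then uses the resulting gap between $\ell_1$ and $P_\nu$ to manufacture a finite $j(u)$ below which no candidate $R_n(u)$ can lie. Your route is analytic: you read off the inequality $H_{S_n(u)}(R_n(u))\ge \Psi_n(u)$ directly from the non-negativity of the left-endpoint atom, and then identify strict positivity of the limit $\Psi(u)$ with the integrated-quantile characterisation of convex order. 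Your Step~2 is essentially the analytic translation of the paper's ``$\ell_1$ is not tangent to $P_\nu$'' observation; it is arguably cleaner and makes the role of convex order explicit.

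One point deserves tightening. Your Step~3 concludes only ``off a further Lebesgue-null set'', because you pass to the limit via $S_n(u)\to S(u)$, which holds only outside $\sN_S$. But the lemma is stated pointwise on $\{G(u+)<S(u)\}$, and the paper's subsequent use (bounding $R_\infty(q)$ for every rational $q\in\sA$) requires this. The fix is easy and stays within your framework: one always has $\liminf_n S_n(u)\ge S(u)$ (compare with $S_n(q)\to S_\infty(q)$ for rational $q<u$ and take $q\uparrow u$), and $\bar G_n(u)\to\bar G(u)$ holds for every $u$ by uniform integrability. If $\phi(u)>0$ then $\liminf_n\Psi_n(u)\ge\phi(u)>0$ regardless of $S_n$. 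If $\phi(u)=0$ your Step~2 gives $G(u+)\ge G_\nu(u+)$, hence $S(u)>G_\nu(u+)$; since $s\mapsto P_\nu(s)-us$ is increasing on $[G_\nu(u+),\infty)$ and $\liminf_n S_n(u)\ge S(u)$, you get $\liminf_n\Psi_n(u)\ge\Psi(u)>0$. Either way $\liminf_n\Psi_n(u)>0$, and combining with the bound $\limsup_n S_n(u)\le J(u)$ from Lemma~\ref{lem:boundS} (so that $H_{S_n(u)}(r)\le H_{J(u)+1}(r)\to 0$ as $r\to-\infty$) yields a finite $j(u)$ with $\liminf_n R_n(u)\ge j(u)$ for every $u$ in the hypothesis set.
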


Let $\sA = \{ u \in (0,1): G(u+) < S(u) \}$. By the above lemma $R_\infty(q) > j(q) >-\infty$ for $q \in \sA$. If $u \in \sA$ then the left continuity of $S$ implies that there exists an interval $(u - \epsilon, u] \subseteq \sA $; since every such interval must contain a rational we have that $\sA$ is a countable union of intervals.

We now show that $R_\infty$ is decreasing on each such interval. Suppose not. Then there exists $q<q'$ in the same interval $I$ with $R_\infty(q') > R_\infty(q)$. Let $v = \inf_{q'' \in \Q \cap I} \{ q'' : R_\infty(q'') > R_\infty(q) \}$. Choose $\tilde{q}_m \uparrow v$ with $\tilde{q}_m \geq q$ and $\hat{q}_n \downarrow v$ with $R_\infty(\hat{q}_n) > R_\infty(q)$. Then $R_\infty(\hat{q}_n) \not\in (R_\infty(\tilde{q}_m),S_\infty(\tilde{q}_m))$, and since $R_\infty(\hat{q}_n)>R_\infty(q) \geq R_\infty(\tilde{q}_m)$ we conclude $R_\infty(\hat{q}_n) \geq S_\infty(\tilde{q}_m)$.
Letting $n$ tend to infinity we conclude $\liminf R_\infty(\hat{q}_n) \geq S_\infty(\tilde{q}_m)$, and letting $m$ tend to infinity $\liminf_{n \uparrow \infty} R_\infty(\hat{q}_n) \geq S(v)$. However, $R_\infty(\hat{q}_n) \leq G_\infty(\hat{q}_n)$ and hence $\limsup_{n \uparrow \infty} R_\infty(\hat{q}_n) \leq G(v+) < S(v)$. These two statements are inconsistent, and hence $R_\infty$ must be decreasing on each interval of $\sA$.

Given that $R_\infty$ is decreasing on each interval of $\sA$, we can define $R$ on $\sA$ by
$R(u) = \lim_{q \uparrow u} R_\infty(q)$. Then the function $R$ is decreasing and therefore has only countably many discontinuities in
any interval of $\sA$. Away from these discontinuities, we have $R_n(u) \rightarrow R(u)$ by an argument similar to that in Lemma~\ref{lem:convergeSG}.

Define $\sB_= = \{ u \in (0,1) : G(u)=S(u) \}$ and $\sB_< = \{ u \in (0,1) : G(u)<S(u) \}$. Then $\sB_< = \sA \cup \sC$ where $\sC = \{ u \in (0,1) : G(u)<S(u) \leq G(u+)  \}$. Since $\sC \subseteq \sN^\Delta_G$, we have that $\sB_<$ and $\sA$ differ by a set of measure zero and we conclude:
\begin{lem}
$I_{ \{ u \in \sB_< \} } (R_n(u) - R(u)) \rightarrow 0$, except on a set of measure zero.
\label{lem:convergeR}
\end{lem}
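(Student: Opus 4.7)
The plan is to reduce the statement to showing $R_n(u) \to R(u)$ on $\sA$ away from a null set, since $\sB_< \setminus \sA \subseteq \sC \subseteq \sN^\Delta_G$ is countable and so has Lebesgue measure zero. Recall from the discussion preceding the lemma that on each maximal subinterval of $\sA$, $R$ is decreasing (and $R_\infty$ is finite by Lemma~\ref{lem:boundR}), so $R$ has at most countably many discontinuities in $\sA$; combined with the countable discontinuity sets $\sN^\Delta_S$ and $\sN^\Delta_G$, this leaves a cocountable subset of $\sA$ on which $R$, $G$, and $S$ are simultaneously continuous. It suffices to prove convergence there.

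Fix such a continuity point $u \in \sA$. Along the fixed subsequence on which $R_n, G_n, S_n$ converge on rationals, the sequence $\{R_n(u)\}$ is bounded above by $G_n(u) \to G(u)$ and below by $\liminf_n R_n(u) \geq j(u) > -\infty$ from Lemma~\ref{lem:boundR}. Hence every further subsequence has a convergent sub-subsequence with some limit $L$, and by the usual subsequence principle it is enough to show that every such $L$ equals $R(u)$.

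For the upper bound $L \leq R(u)$, I will take rationals $q_m \uparrow u$ inside the component of $\sA$ containing $u$ and invoke the non-interlacing property \eqref{eq:RSproperties} applied to $(R_n, S_n)$ in the form $R_n(u) \notin (R_n(q_m), S_n(q_m))$. Passing to the limit along the convergent sub-subsequence gives, for each $m$, the dichotomy $L \leq R_\infty(q_m)$ or $L \geq S_\infty(q_m)$. Because $L \leq G(u) < S(u) = \lim_m S_\infty(q_m)$ (using that $u \in \sA$ and that $S$ is continuous at $u$), the second alternative is excluded for all large $m$, so $L \leq \lim_m R_\infty(q_m) = R(u)$. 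For the reverse bound I will take rationals $\tilde q_m \downarrow u$ in the same component of $\sA$ and use \eqref{eq:RSproperties} in the form $R_n(\tilde q_m) \notin (R_n(u), S_n(u))$. The same dichotomy, combined with $R_\infty(\tilde q_m) \leq G_\infty(\tilde q_m) \to G(u+) < S(u)$, eventually excludes the alternative $R_\infty(\tilde q_m) \geq S(u)$ and forces $R_\infty(\tilde q_m) \leq L$. Continuity of $R$ at $u$ and the monotonicity of $R_\infty$ on the interval then give $R(u) = \lim_m R_\infty(\tilde q_m) \leq L$.

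The main obstacle is the absence of monotonicity of $R_n$ in $u$, which prevents a direct copy of the sandwich argument used for $S_n$ and $G_n$ in Lemma~\ref{lem:convergeSG}. The substitute is the non-interlacing property \eqref{eq:RSproperties}, which pins any subsequential limit of $R_n(u)$ between rational approximants so long as the gap $G(u+) < S(u)$ is strictly positive --- which is precisely the definition of $\sA$, and the reason the statement is restricted to $\sB_<$ rather than all of $(0,1)$.
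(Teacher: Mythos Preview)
Your proof is correct and follows the same route the paper sketches: reduce from $\sB_<$ to $\sA$ via the countable set $\sC \subseteq \sN^\Delta_G$, then work at continuity points of $R$ (and of $G,S$) inside the intervals of $\sA$ and exploit the monotonicity of $R_\infty$ there. The paper compresses the convergence step into the phrase ``by an argument similar to that in Lemma~\ref{lem:convergeSG}''; you have correctly supplied the missing ingredient, namely that the non-interlacing property \eqref{eq:RSproperties} for $(R_n,S_n)$, combined with $R_n(u)\le G_n(u)\to G(u)<S(u)$, forces $R_n(u)\le R_n(q)$ for rationals $q$ near $u$ and thereby replaces the global monotonicity of $S_n$ used in Lemma~\ref{lem:convergeSG}.
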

Note that we cannot expect $R_n(u)$ to converge on $\sB_=$.

It remains to define $R$ on $\sB_=$ and $\sC$ in such a way that $R$ satisfies \eqref{eq:RSproperties}. On $\sB_=$ we set $R(u)=G(u)=S(u)$. For $u \in \sC$ we have by the left continuity of $S$ that there exists $\epsilon > 0$ such that $I=(u-\epsilon,u) \subset \sA$. By the same arguments as before we conclude that $R_\infty$ is decreasing on $I$ and we set $R(u) = \lim_{q \uparrow u} R_\infty(q)$. Indeed, for $u \in \sB_<$ we have $R(u) = \lim_{q \uparrow u} R_\infty(q)$. Note that for $u \in \sC$ we may have that $R(u+) > R(u)$ and it is not true in general that $R$ is decreasing on intervals contained in $\sB_<$.

Fix $u<v$.
If $u$ or $v$ is in $\sB_=$ then since we have defined $R(w)=G(w)=S(w)$ on $\sB_=$ we trivially have $R(v) \notin (R(u),S(u))$. For $u,v \in \sB_{<}$
choose sequences $\{q_m\}_m$ with $q_m < u$ and $q_m \uparrow u$ and $\{ q_l \}_l$ with $q_l \in (u,v)$ and $q_l \uparrow v$. Then $R_n(q_l) \notin (R_n(q_m),S_n(q_m))$ and hence
$R_\infty(q_l) \notin (R_\infty(q_m),S_\infty(q_m))$. Letting $l \uparrow \infty$ we have $R(v) \notin (R_\infty(q_m),S_\infty(q_m))$ and letting $m \uparrow \infty$ we have $R(v) \notin (R(u),S(u))$. Hence, $(R,G,S)$ satisfy \eqref{eq:RSproperties}.




On the space $ \{ (r,g,s) ; r \leq g \leq s \} \subseteq \R^3$ define $\Theta^x = \Theta^x(r,g,s)$ by $\Theta^x(r,g,s) = I_{ \{ r \leq x <s \} } \frac{s-g}{s-r}$ with the convention that $\Theta^x(r,g,s) = 0$ for $g=s$. In particular,
$\Theta^x(g,g,g) = 0$.

\begin{prop}
If $x$ is such that $\Leb(\{ u: S(u) = x \} \cup \{ u:R(u)=x ; S(u) > G(u) \} ) = 0$, then we have
\begin{equation}
\int_0^1 du \left\{ I_{ \{ S_n(u) \leq x \} } + \Theta^x (R_n(u),G_n(u),S_n(u)) \right\} \rightarrow \int_0^1 du \left\{ I_{ \{ S(u) \leq x \} } + \Theta^x(R(u),G(u),S(u)) \right\}
\label{eq:intconv2}
\end{equation}
\label{prop:intconv}
\end{prop}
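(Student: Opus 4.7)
The plan is to apply the bounded convergence theorem on $(0,1)$. Since $I_{\{s\leq x\}}\in\{0,1\}$ and $\Theta^x\in[0,1]$, the integrands on both sides of \eqref{eq:intconv2} take values in $[0,2]$, so it suffices to show that Lebesgue-almost every $u\in(0,1)$ is a point at which
\begin{equation*}
I_{\{S_n(u)\leq x\}}+\Theta^x(R_n(u),G_n(u),S_n(u)) \longrightarrow I_{\{S(u)\leq x\}}+\Theta^x(R(u),G(u),S(u)).
\end{equation*}

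For the indicator term, Lemma~\ref{lem:convergeSG} gives $S_n(u)\to S(u)$ off a null set, and $\{u:S(u)=x\}$ is null by hypothesis; since $s\mapsto I_{\{s\leq x\}}$ is continuous at every $s\neq x$, the convergence $I_{\{S_n(u)\leq x\}}\to I_{\{S(u)\leq x\}}$ holds a.e.

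For the $\Theta^x$ term I would split $(0,1)$ into $\sB_<=\{G<S\}$ and $\sB_==\{G=S\}$. On $\sB_<$, Lemma~\ref{lem:convergeR} together with the convergence of $G_n$ and $S_n$ yields $(R_n,G_n,S_n)(u)\to(R,G,S)(u)$ off a null set, and since $R(u)\leq G(u)<S(u)$ the denominator $s-r$ is bounded away from zero near the limit, so $(r,g,s)\mapsto(s-g)/(s-r)$ is continuous there. The indicator $(r,g,s)\mapsto I_{\{r\leq x<s\}}$ is continuous at $(R,G,S)(u)$ provided $R(u)\neq x$ and $S(u)\neq x$, each of which holds a.e. on $\sB_<$ by the assumption on $x$. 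Hence $\Theta^x(R_n,G_n,S_n)\to\Theta^x(R,G,S)$ a.e.\ on $\sB_<$.

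The main obstacle is the case $u\in\sB_=$: by our convention $R(u)=G(u)=S(u)$ so $\Theta^x(R,G,S)=0$, but $R_n(u)$ need not converge. Write $a=S(u)$. If $a<x$, then $S_n(u)<x$ eventually, so the factor $I_{\{x<S_n\}}$ vanishes. If $a>x$, then $G_n(u)\to a>x$ forces $S_n(u)-G_n(u)\to 0$; on $\{R_n(u)\leq x\}$ one has $S_n(u)-R_n(u)\geq S_n(u)-x\to a-x>0$, giving
\begin{equation*}
0\leq \Theta^x(R_n,G_n,S_n)(u) \leq \frac{S_n(u)-G_n(u)}{S_n(u)-x} \longrightarrow 0,
\end{equation*}
while on $\{R_n(u)>x\}$ the indicator vanishes outright. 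The residual event $\{a=x\}\subseteq\{S(u)=x\}$ is null by hypothesis. Combining these cases gives a.e.\ pointwise convergence of the integrand, and bounded convergence delivers \eqref{eq:intconv2}.
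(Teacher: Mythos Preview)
Your proof is correct and follows essentially the same approach as the paper: bounded convergence, with the indicator term handled via $S_n\to S$ a.e.\ and $\{S=x\}$ null, and the $\Theta^x$ term split according to whether $G(u)<S(u)$ (where continuity applies once $R(u)\neq x$, $S(u)\neq x$) or $G(u)=S(u)$ (where the bound $\Theta^x(R_n,G_n,S_n)\leq I_{\{S_n>x\}}(S_n-G_n)/(S_n-x)\to 0$ does the work). Your case-split $a<x$ versus $a>x$ on $\sB_=$ is just an explicit unpacking of the same estimate the paper writes in one line.
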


\begin{proof}
Since $S_n(u) \rightarrow S(u)$ almost surely and since $\int_0^1 du I_{ \{ S(u)=x \} } = 0$ by hypothesis, we have
$\int_0^1 du  I_{ \{ S_n(u) \leq x \} } \rightarrow \int_0^1 du  I_{ \{ S(u) \leq x \} }$ by bounded convergence.

Let $\Omega_< = \{u: S_n(u) \rightarrow S(u), G_n(u) \rightarrow G(u), R_n(u) \rightarrow R(u), G(u)<S(u) \}$ and $\Omega_= = \{u: S_n(u) \rightarrow S(u), G_n(u) \rightarrow G(u), G(u)=S(u) \}$. By Lemmas~\ref{lem:convergeSG} and \ref{lem:convergeR}, $\Leb(\Omega_< \cup \Omega_=)=1$.

Now let $\Omega^x_< = \{u: S_n(u) \rightarrow S(u) \neq x, G_n(u) \rightarrow G(u), R_n(u) \rightarrow R(u) \neq x, G(u)<S(u) \}$ and $\Omega^x_= = \{u: S_n(u) \rightarrow S(u) \neq x, G_n(u) \rightarrow G(u), G(u)=S(u) \}$. By the hypothesis on $x$ we still have that $\Leb(\Omega^x_< \cup \Omega^x_=)=1$, and by bounded convergence the result of the proposition will follow if we can show that $\Theta^x(R_n,G_n,S_n) \rightarrow \Theta^x(R,G,S)$ on
$\Omega^x_< \cup \Omega^x_=$.

This is immediate on $\Omega^x_<$. On $\Omega^x_=$ we need only note that, 
\[ \Theta^x(R_n,G_n,S_n) = I_{ \{ R_n \leq x < S_n \} } \frac{(S_n - G_n)}{(S_n-R_n)} \leq \frac{(S_n - G_n)}{(S_n-x)} I_{ \{ S_n > x \} } \rightarrow 0 = \Theta^x(R,G,S) . \]

\end{proof}

\begin{proof}[Proof of Theorem~\ref{thm:atoms}]
All that remains to show is that $(R,G,S)$ embeds $\nu$.

There are at most countably many $x$ for which $\Leb(\{ u: S(u) = x \}) + \Leb (\{ u:R(u)=x ; S(u) > G(u) \} ) > 0$. Hence it is sufficient to prove that
$\int_0^1 du \left\{ I_{ \{ S(u) \leq x \} } + I_{ \{ R(u) \leq x < S(u) \} } \frac{S(u) - G(u)}{S(u)-R(u)} \right\} = \nu((-\infty,x])$ outside this set.
For such an $x$, \eqref{eq:intconv2} holds. Then, since $(R_n,G_n,S_n)$ embeds $\nu$ from $\mu_n$,
\begin{eqnarray*}
\lefteqn{\int_0^1 du \left\{ I_{ \{ S(u) \leq x \} } + I_{ \{ R(u) \leq x < S(u) \} } \frac{S(u) - G(u)}{S(u)-R(u)} \right\} }\\
& = & \lim_n \left\{ \int_0^1 du \left\{ I_{ \{ S_n(u) \leq x \} } + I_{ \{ R_n(u) \leq x < S_n(u) \} } \frac{S_n(u) - G_n(u)}{S_n(u)-R_n(u)} \right\}  \right\} \\
& = & \lim_n \nu((-\infty,x]) = \nu((-\infty,x])
\end{eqnarray*}
as required.
\end{proof}
We would like to thank the anonymous referee for the following idea for an alternative proof of Theorem~\ref{thm:atoms}.
\begin{rem}[Alternative construction]
Let $(\pi_{lc}^x)_{x\in\R}$ be the disintegration of $\pi_{lc}$ with respect to $\mu$, so that $\pi_{lc}(dx,dy)=\mu(dx)\pi_{lc}^x(dy)$. It follows that for any $\mu'\leq\mu$, $\pi'(dx,dy):=\mu'(dx)\pi_{lc}^x(dy)$ is again a left-curtain coupling.
 Decompose $\mu=\mu_c+\sum_n\alpha_n\delta_{x_n}$ into continuous and discrete parts, respectively. The desired representation of $\pi_{lc}$ through graphs of functions can then be obtained by pasting together the representations of $\pi_c(dx,dy):=\mu_c(dx)\pi_{lc}^x(dy)$ and $\pi_d(dx,dy):=\sum_n\alpha_n\delta_{x_n}(dx)\pi_{lc}^{x_n}(dy)$. Note that in the case of $\pi_c$, the result of Theorem~\ref{thm:atoms} follows from the original theorem of Beiglb{\"o}ck and Juillet~\cite{BeiglbockJuillet:16}, while the case of $\pi_d$ follows from the arguments given in Section~\ref{sec:atoms}.
\end{rem}

\section{Robust bounds for the American put}

Our motivation for the study of the left-curtain mapping came from a connection with the robust pricing of American puts. In robust or model-independent pricing (Hobson~\cite{Hobson:98,Hobson:survey}) the idea is that instead of writing down a model for the asset price (for example, geometric Brownian motion or a stochastic volatility model) we consider the class of all models for which the discounted asset price is a martingale and which are consistent with the prices of traded vanilla options. Then, given an exotic option which we would like to price, we search over this class of models to find the range of feasible model-based prices.

Typically the set of traded vanilla options is taken to be the set of European-style puts and calls. Given a family of European puts and calls for a fixed maturity and a continuum of strikes we can infer the law of the asset price at that maturity (under the market measure used for pricing). Given the prices of puts and calls for a sequence of maturities  we can infer the marginal distributions of the asset price, but not the joint distributions. Then, working under the bond-price numeraire, the class of asset price processes which are consistent with the prices of traded vanilla options can be identified with the class of martingales with given marginals.  The problem of finding the robust upper bound on the price of an American-style option becomes a search over consistent martingale models of the model-based price of the American option, see Neuberger~\cite{Neuberger:07}, Hobson and Neuberger~\cite{HobsonNeuberger:17} and Bayraktar et al.~\cite{BayraktarHuangZhou:15}. Crucially, the primal pricing problem can be identified with a dual hedging problem.

When the American-style option is an American put and the number of candidate exercise dates is two, Hobson and Norgilas~\cite{HobsonNorgilas:17} solve for the robust upper bound under an assumption that the law of the underlying at the first exercise date is continuous. It turns out that the consistent model for which the American put has highest price is the model associated with the left-curtain coupling of Beiglb\"ock and Juillet~\cite{BeiglbockJuillet:16}. Here we briefly explain how the results of Hobson and Norgilas extend to the atomic case, and why the atomic case is important. There is a subtlety in the case with atoms which is not present when there are no atoms, and to deal with this subtlety we need the extension of the left-curtain coupling to the atomic case as constructed in this paper.


We are interested in pricing the American put which, in discounted units has strike $K_1$ at maturity 1 and strike $K_2$ at maturity 2, with $K_2<K_1$, see Hobson and Norgilas~\cite{HobsonNorgilas:17}. The expected payoff arising from a given joint law $\pi \in \Pi_M(\mu,\nu)$ and a given stopping rule $\tau$ taking values in $\{1,2\}$ is
\[ \phi_\pi(\tau) = \E^{\sL(X_1,X_2) \sim \pi}\left[(K_1 - X_1)^+ I_{\{ \tau = 1 \} } + (K_2 - X_2)^+ I_{\{ \tau = 2 \} } \right] \]
Here $X$ represents the discounted asset price, and is a martingale with joint law $\pi$.

For a Borel set $B$ we can let $\tau_B$ be the stopping rule $\tau_B = 1$ if $X_1 \in B$ and $\tau_B = 2$ otherwise. Then the payoff under the stopping rule $\tau_B$ is $\Phi_\pi(B) := \phi_\pi(\tau_B)$ and the American put price under the model is $\overline{\Phi}_\pi = \sup_B \Phi_\pi(B)$.

Bayraktar et al.~\cite{BayraktarHuangZhou:15}\footnote{\cite{BayraktarHuangZhou:15} contains many interesting and important results and this is just a small element of the paper} define the upper bound on the price of the American put to be
\[ {\sP}_{BHZ} = \sup_{\pi \in \Pi_M(\mu,\nu)} \overline{\Phi}_\pi = \sup_{\pi \in \Pi_M(\mu,\nu)} \sup_B \Phi_\pi(B). \]

The definition of the model-independent upper bound on the price of the American put given by Neuberger~\cite{Neuberger:07} and Hobson and Neuberger~\cite{HobsonNeuberger:17} is different.
Suppose $(\sS = (\Omega, {\mathcal F}, \mathbb F, \Prob), X=(X_0,X_1,X_2))$ is a $(\mu,\nu)$-consistent model.
The model-based price of the American put is
\[ \sA( \sS,X) = \sup_{\tau \in \sT_{1,2}(\sS)} \E^{\sS,X}[(K_\tau - X_\tau)^+ ] \]
where $\sT_{1,2}(\sS)$ is the set of all $\mathbb F$-stopping times taking values in $\{1,2\}$. Then (Neuberger~\cite{Neuberger:07}, Hobson and Neuberger~\cite{HobsonNeuberger:17}) the highest model-based price is
\begin{equation}
{\sP}_{N} = \sup_{\sS,X} \sA(\sS,X)
\label{eq:NeubergerPrimal}
\end{equation}
where the supremum is taken over $(\mu,\nu)$-consistent models.

Set $\overline{\Omega} = \R \times \R= \{\omega = (\omega_1, \omega_2)\}$, $\overline{\sF} = \sB(\Omega)$ and $(X_1(\omega),X_2(\omega)) = (\omega_1,\omega_2)$, and let $\overline{\Prob}$ be such that $\sL(X_1) = \mu$ and $\sL(X_2) = \nu$. Let $\overline{\sF}_0= \{ \emptyset, \Omega \}$, $\overline{\sF}_1 = \sigma(X_1)$ and $\overline{\sF}_2 = \sigma(X_1,X_2)$. 
If $\overline{\sS} = (\overline{\Omega},\overline{\sF},\overline{\F},\overline{\Prob})$ then $(\overline{S},\overline{X})$ is a $(\mu,\nu)$-consistent model.

Consistent models of the form $(\overline{S},\overline{X})$ can be identified with martingale couplings $\pi$. It follows that ${\sP}_{BHZ} \leq {\sP}_N$, the inequality following from the fact that in principle we could work on a richer probability space.
It follows from the work of Hobson and Norgilas~\cite{HobsonNorgilas:17} that if $\mu$ is continuous then the martingale coupling associated with the optimiser for either ${\sP}_{BHZ}$ or $\sP_N$ is the left-curtain coupling and ${\sP}_{BHZ} = {\sP}_N$. Our interest in extending the left-curtain mapping arose from the fact that when $\mu$ has atoms we may have ${\sP}_{BHZ} < {\sP}_N$. Then, in order to construct the optimiser for ${\sP}_N$ we need an appropriate extension of the left-curtain coupling.

\subsection{The trivial law for $\mu$}
\label{ssec:trivial}
The difference between the modelling approaches of Bayraktar et al.~\cite{BayraktarHuangZhou:15} and Hobson and Neuberger~\cite{HobsonNeuberger:17} can be illustrated most simply when $\mu = \delta_w$. Also for simplicity we assume $\nu$ has a continuous law with mean $w$.

In the framework of Bayraktar et al.~\cite{BayraktarHuangZhou:15}, since the filtration generated by $X$ is still trivial at time 1, the only choices facing the holder of the American put are either to always stop at time 1, or to never stop at time 1. The expected payoff of the American put does not depend on the martingale coupling and thus
\begin{align*}
\sP_{BHZ} =  \sup_{\pi \in \Pi_M(\mu,\nu)}\max \{ \Phi_\pi(\Omega), \Phi_\pi(\emptyset) \}  &=\sup_{\pi \in \Pi_M(\mu,\nu)} \max \{ \phi_\pi(1), \phi_\pi(2) \}\\ &= \max \left\{ (K_1-w)^+, \int (K_2-z)^+ \nu(dz) \right\}. \end{align*}

On the other hand we can construct a richer model which is $(\delta_w, \nu)$ consistent. Set $\Omega = (0,1) \times (0,1)$ and let $\Prob$ be Lebesgue measure on $\Omega$. Let $(U,V)$ be a pair of independent uniform random variables, let $(\sF_0= \{ \emptyset, \Omega \}, \sF_1= \sigma(U), \sF_2 =\sigma(U,V))$ and let $X_0=X_1=w$ and $X_2 = Y$, where $Y=Y(U,V)$ is as given in \eqref{eq:YUVdef} with $G(u) \equiv w$. Here $(R,S)$ are a pair of monotonic functions with
\begin{equation} u = \int_{R(u)}^{S(u)} \nu(dz), \hspace{30mm} 0 = \int_{R(u)}^{S(u)} (z-w) \nu(dz) . \label{eqn:trivialRS} \end{equation}
In this way we construct a  $(\mu, \nu)$-consistent model.

Under this model the value $A(u)$ of the American put under the stopping rule $\tau_u$ where $\tau_u=1$ if $U<u$ and $\tau_u=2$ otherwise is
\begin{eqnarray*}
 A(u) & = & \E[(K_1-X_1)^+ I_{\{ \tau_u = 1 \}} + (K_2 - X_2)^+ I_{\{ \tau_u = 1 \}} ] \\
 & = & (K_1-w) u + \int_{-\infty}^{R(u)} (K_2-z)^+ \nu(dz) + \int^{\infty}_{S(u)} (K_2-z)^+ \nu(dz) .
\end{eqnarray*}
It follows that ${\sP}_N \geq  \sup_{u \in [0,1]} A(u)$. (In the next section we will argue that there is equality here.) Note that ${\sP}_{BHZ} = A(0) \vee A(1)$, so that ${\sP}_{N}> {\sP}_{BHZ}$ will follow if $\sup_{u \in [0,1]} A(u) > A(0) \vee A(1)$.

For a simple example, suppose $w=1$ and $\nu = U[0,2]$; suppose $K_1=\frac{5}{4}$ and $K_2=1$. Then $R(u) = 1-u$ and $S(u)=1+u$. We have
\[ A(u) = \frac{u}{4} + \int_0^{1-u} (1-z) \frac{dz}{2} = \frac{1+u-u^2}{4} . \]
Then ${P}_N \geq \max_{u \in [0,1]} A(u) = \frac{5}{16} > \frac{1}{4} = A(0) \vee A(1) = {\sP}_{BHZ}$.

\begin{rem}
In our set-up there are two possible exercise times for the American put, denoted 1 and 2, and we construct a martingale $(X_0=w,X_1,X_2)$ to match the marginals at these times. But if $\sL(X_1) = \delta_{X_0}$ the problem can be recast as a problem for a stochastic process $\tilde{X} = (\tilde{X}_0,\tilde{X}_1)$ where $\tilde{X}_0 = X_0=X_1$ and $\tilde{X}_1 = X_2$. We also set $\tilde{\tau}=\tau-1$; then $\tilde{\tau} \in \{0,1\}$ and $\tilde{\tau}=0$ corresponds to immediate exercise. Put another way, one way to allow for immediate exercise of the American put, is to introduce an additional point (labelled 1) into the time-indexing set and to require $\sL(X_1) = \delta_{X_0}$. For this reason it is very natural for $\mu$ to have a trivial law, if we want to allow immediate exercise.
\end{rem}

\subsection{Tightness of the bound for a trivial law $\mu$}
Our goal in this section is to show that ${\sP}_N = \sup_{u \in [0,1]} A(u)$. We do this by finding an upper bound on the American put pricing problem and then showing that this bound is equal to $\sup_{u \in [0,1]} A(u)$.

Let $\psi$ be a convex function with $\psi(z) \geq (K_2-z)^+$. Let $\phi(z) = ((K_1 - z)^+ - \psi(z))^+$ and let $\theta(z) = - \psi_+'(z)$, where $\psi_+'$ is the right derivative. Then, for all $x_1$ and $x_2$
\begin{eqnarray*}
(K_1 - x_1)^+ & \leq & \phi(x_1) + \psi(x_2) + (x_2-x_1) \theta(x_1), \\
(K_2 - x_2)^+ & \leq & \phi(x_1) + \psi(x_2).
\end{eqnarray*}
It follows that for any set $B \in \sF$ and for every $\omega$,
\[ (K_1 - X_1)^+ I_B + (K_2 - X_2)^+ I_{B^C} \leq \phi(X_1) + \psi(X_2) + (X_2 - X_1) \theta(X_1) I_B . \]
In particular, if we think of $B$ as the set of scenarios on which the put is exercised at time 1 then we have that the payoff of the American put is bounded above by the sum of the European-style payoffs $\phi$ and $\psi$ and the gains from trade from a strategy which involves holding $\theta(X_1)$ units of the underlying over the time-interval $(1,2]$, provided the put was exercised at time 1. Then, for $B \in \sF_1$
\begin{eqnarray*}
\E[(K_{\tau_B} - X_{\tau_B})^+] & \leq & \E[\phi(X_1)] + \E[\psi(X_2)] \\
                               & = & \int ((K_1 - x)^+ - \psi(x))^+ \mu(dx) + \int \psi(y) \nu(dy).
\end{eqnarray*}
In our context with $\mu = \delta_w$ this simplifies to $((K_1-w)^+ - \psi(w))^+ +  \int \psi(y) \nu(dy)=: \sD(\psi)$.
Let $\sD = \inf_\psi \sD(\psi)$ (where the infimum is taken over convex $\psi$ with $\psi(z) \geq (K_2-z)^+$). $\sD$ forms an upper bound for the price of the American option under any consistent model and hence $\sP_N \leq \sD$.

Let $R$ and $S$ be defined as in Section~\ref{ssec:pointmass}. Let $P_\nu(z) = \int (z-x)^+ \nu(dx)$. Then \eqref{eqn:trivialRS} can be rewritten as $u = P'_\nu(S(u))- P'_\nu(R(u))$ together with
\begin{equation}  (S(u)-w)P'_\nu(S(u))- P_\nu(S(u)) = (R-w)P'_\nu(R(u)) - P_\nu(R(u)) . \label{eqn:trivialRS2} \end{equation}

Fix $K_2<K_1$ with $K_1>w$ and define $\Lambda_w:(-\infty, K_2 \wedge w) \times (K_1,\infty) \mapsto \R$ by
\[ \Lambda_w(r,s) = \frac{K_1-w}{s-w} - \frac{(K_2 - r)-(K_1-w)}{w-r}. \]
Since $\nu$ is continuous by assumption, $R$ and $S$ are strictly decreasing and strictly increasing, respectively. Define $u_w = \inf \{ u \in (0,1) : \mbox{$R(u)<K_2$ and $S(u)>K_1$} \}$, and for $u \in (u_w,1)$ set $\bar{\Lambda}_w(u) = \Lambda_w(R(u),S(u))$. It follows that $\bar{\Lambda}_w$ is strictly decreasing.

Suppose that $\sI_\nu = [\ell_\nu,r_\nu]$ is such that $\frac{K_1-w}{r_\nu-w} < \frac{(K_2 - \ell_\nu)-(K_1-w)}{w-\ell_\nu}$ (this will follow if $0 = \ell_\nu < w < r_\nu = \infty$ and $K_2 > K_1-w$, for example). This assumption is sufficient to guarantee that
there exists $u^* \in (u_w,1)$ such that $\bar{\Lambda}_w(u^*)=0$. Then $S^*:=S(u^*) > K_1 > K_2 > R(u^*) =: R^*$.
Also $\bar{\Lambda}_w(u^*)=0$ implies $\frac{K_1-w}{S^*-w} = \frac{K_2-R^*}{S^*-R^*}$. For the model constructed in Section~\ref{ssec:pointmass} we have
\begin{eqnarray*}
 \sup_{u \in [0,1]}A(u) \geq A(u^*) & = & (K_1-w)^+ u^* + \int_{-\infty}^{R^*}(K_2-z)^+ \nu(dz) \\
  &=& (K_1-w)[P'_\nu(S^*) - P'_\nu(R^*)] + P_\nu(R^*) + (K_2-R^*) P'_\nu(R^*).
\end{eqnarray*}

Conversely, let $\Theta = \frac{K_1-w}{S^*-w} = \frac{K_2-R^*}{S^*-R^*} = \frac{(K_2 - R^*)-(K_1-w)}{w-R^*}\in (0,1)$ and let $\psi^*(x) = \Theta(S^*-x)^+ + (1- \Theta)(R^*-x)^+$. Note that by design $\psi^*(R^*) = \Theta(S^*-R^*)=(K_2-R^*)$ so that $\psi^*(z) \geq (K_2 - z)^+$. Further, $\psi^*(w) = \Theta(S^*-w)=(K_1-w)$ so that $\phi^*(w)=0$ where $\phi^*(z) = ((K_1-z)^+ - \psi^*(z))^+$. Then
$\sD \leq \Theta P_{\nu}(S^*) + (1-\Theta)P_\nu(R^*) = \sD(\psi^*)$.

Now consider $\sD(\psi^*)-A(u^*)$. Using \eqref{eqn:trivialRS2} for the second equality and the alternative characterisations of $\Theta$ for the third
we have
\begin{eqnarray*}
\sD(\psi^*) - A(u^*) & = & \Theta ( P_{\nu}(S^*) - P_{\nu}(R^*)) - (K_1-w)[P'_{\nu}(S^*) - P'_{\nu}(R^*)] - (K_2 -R^*)P'_{\nu}(R^*) \\
& = & P'_{\nu}(S^*)[\Theta(S^*-w) - (K_1-w)]  - P'_{\nu}(R^*)[\Theta(w-R^*) - (K_1-w) + (K_2-R^*)] \\
& = & 0.
\end{eqnarray*}
Then $\sD(\psi^*) = A(u^*) \leq \sup_{u \in [0,1]}A(u) \leq \sP_N \leq \sD \leq \sD(\psi^*)$. It follows that this chain of inequalities is in fact a chain of equalities and $\sP_{N} = \sup_{u \in [0,1]} A(u)$. Moreover, we have identified an optimal model and an optimal stopping rule. The model which yields the highest price for the American put is our extension of the left-curtain coupling.

\subsection{American puts with a general time-1 law }

We seek to generalise the arguments of the previous section to allow for non-trivial initial laws.
Define $\Lambda=\Lambda(r,g,s)$ via
\[ \Lambda(r,g,s) = \frac{K_1-g}{s-g} - \frac{(K_2 - r)-(K_1-g)}{g-r} \]
Suppose we are in the case of continuous $\mu$. Define $\hat{\Lambda}(x) = \Lambda(f(x),x,g(x))$ where $f$ and $g$ are the lower and upper functions which arise in the Beiglb\"{o}ck-Juillet~\cite{BeiglbockJuillet:16} characterisation of the left-curtain martingale coupling. In our notation this can be written as $\hat{\Lambda}(x) = \Lambda((R \circ G^{-1})(x),x,(S \circ G^{-1})(x))$. The fundamental insight in Hobson and Norgilas~\cite{HobsonNorgilas:17} is that, in the case of continuous $\mu$, the cheapest superchedge can be described in terms of a simple portfolio of European-style puts whose strikes depend on quantities which arise from looking for the root $x^*$, if any, of $\hat{\Lambda}( \cdot)=0$. Moreover the most expensive model is the model described by the left-curtain coupling, and an optimal exercise rule is to exercise at time-1 if and only if $X_1<x^*$. Hobson and Norgilas~\cite{HobsonNorgilas:17} identify four archetypes of hedging portfolios. The first two cases correspond to when there is a root to $\hat{\Lambda}=0$ and when $\hat{\Lambda}<0$ for all $x$. (The remaining cases correspond to cases where $\hat{\Lambda}$ is discontinuous, and jumps downwards over the value 0.)

In the case with atoms in $\mu$ we cannot use $\hat{\Lambda}$ directly since $G^{-1}$ has jumps. Instead, following the analysis in Section~\ref{ssec:trivial} we define $\bar{\Lambda}(u) = \Lambda(R(u),u,S(u))$, and look for solutions, if any, to $\bar{\Lambda}(\cdot)=0$. We may still have the cases where $\bar{\Lambda}<0$ for all $u \in (0,1)$ or where $\bar{\Lambda}(\cdot)$ jumps over zero, but these cases can be dealt with as in \cite{HobsonNorgilas:17}.
The new case is when the root $u^*$ of $\bar{\Lambda} =0$ occurs in an interval $(\underline{u}, \overline{u}]$ over which $G$ is constant. This means that there is an atom of $\mu$ at $G(u^*)$. See Figure~\ref{fig:strategy}. A model which maximises the price of the American put is the extended left-curtain martingale coupling model, and the optimal stopping rule is to exercise at time-1 whenever $X_1 < G(u)$ and to sometimes exercise when $X_1 = G(u)$. When $X_1=G(u)$ the optimal stopping rule is to exercise precisely when $U \in (\underline{u},u^*]$ and to wait if $U \in (u^*,\overline{u}]$. Because $R$ and $S$ are monotonic over $(\underline{u},\overline{u}]$ paths with low future variability are exercised at time-1 whereas on paths with high future variability exercise is delayed to time-2.


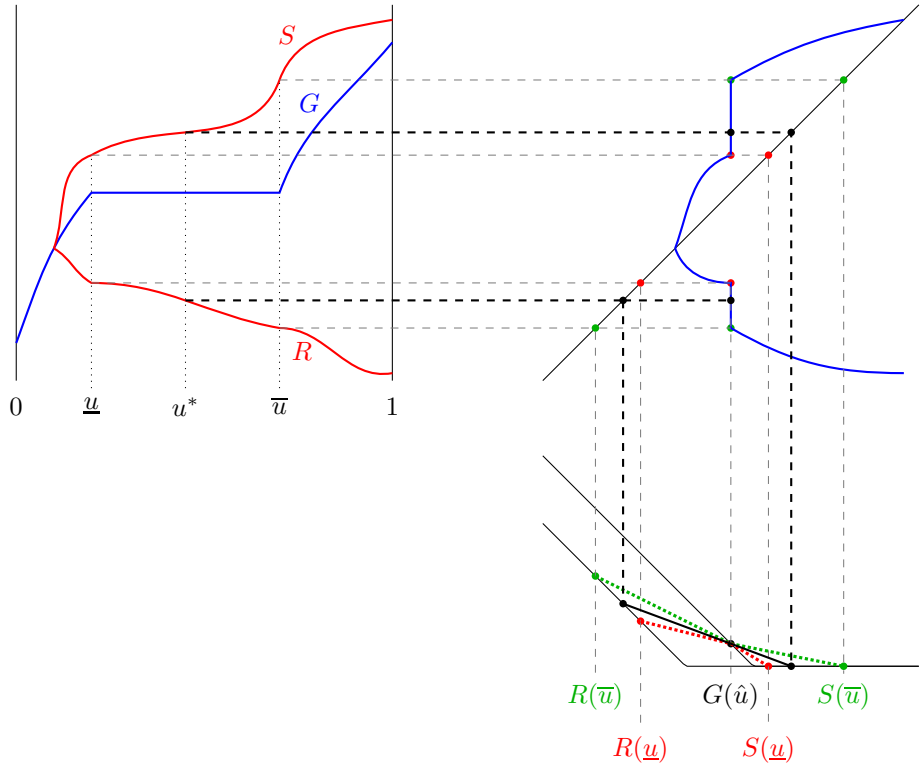
\begin{figure}[H]

\centering
\begin{tikzpicture}[scale=1,
declare function={	
k1=2.8;
k2=1.9;
Xop=-4.75;
a(\x)=((k1-\x)*(\x<k1))-k1-1;
b(\x)=((k2-\x)*(\x<k2))-k1-1;
x1=-6.5;
x2=-6.3;
z1=-5.2;
z2=-4.7;
 }]

               \draw[ black] (-7,0)--(-7,5);
               \draw[black] (-2,0)--(-2,5);

               \draw[name path=diag, black] (0,0) -- (5,5);

	\draw[name path=a, black] plot[domain=0:5, samples=100] (\x,{a(\x)}) ;
	\draw[name path=b, black] plot[domain=0:5, samples=100] (\x,{b(\x)}) ;

    \draw[blue,thick, name path=g1] (-7,0.5) to[out=70, in=230] (-6,2.5) -- (-3.5,2.5) to[out=70, in=230] (-2,4.5)  ;

      \path [name path=lineA](x1,0) -- (x1,5);
	\path [gray, very thin, name intersections={of=lineA and g1}] (x1,0) -- (intersection-1);
	\coordinate (temp1) at (intersection-1);
	
	\draw[red,thick, name path=s0] (temp1) to[out=70, in=200] (-6,3) to[out=30, in=250] (-3.5,4) to[out=70, in=190] (-2,4.8) ;
	
	\path[red,thick, name path=s1] (temp1) to[out=70, in=200] (-6,3) ;
	\path[red,thick, name path=s2] (-6,3) to[out=0, in=250] (-3.5,4) ;
	\path[red,thick, name path=s3] (-3.5,4) to[out=70, in=190] (-2,4.8) ;
	
	
	\draw[red,thick, name path=r0] (temp1) to[out=330, in=150] (-6,1.3) to[out=0, in=170] (-3.5,0.7) to[out=0, in=190] (-2,0.1) ;


   \node (S1)[scale=0.3, shape=circle, fill, red] at (2.5,3) {} ;
    \node (dS1)[scale=0.3, shape=circle, fill, red] at (3,3) {} ;
     \node (bS1)[scale=0.3, shape=circle, fill, red] at (3,{b(3)}) {} ;
   \node (S3)[scale=0.3, shape=circle, fill, black!30!green] at (2.5,4) {} ;
   \node (dS3)[scale=0.3, shape=circle, fill, black!30!green] at (4,4) {} ;
     \node (bS3)[scale=0.3, shape=circle, fill, black!30!green] at (4,{b(4)}) {} ;

     \node (R1)[scale=0.3, shape=circle, fill, red] at (2.5,1.3) {} ;
    \node (dR1)[scale=0.3, shape=circle, fill, red] at (1.3,1.3) {} ;
     \node (bR1)[scale=0.3, shape=circle, fill, red] at (1.3,{b(1.3)}) {} ;
   \node (R3)[scale=0.3, shape=circle, fill, black!30!green] at (2.5,0.7) {} ;
   \node (dR3)[scale=0.3, shape=circle, fill, black!30!green] at (0.7,0.7) {} ;
     \node (bR3)[scale=0.3, shape=circle, fill, black!30!green] at (0.7,{b(0.7)}) {} ;
      \node (atm)[scale=0.3, shape=circle, fill, black] at (2.5,{a(2.5)}) {};

    \gettikzxy{(temp1)}{\l}{\k};
    \draw[blue,thick, name path=f] (\k,\k) to[out=70, in=200] (2.5,3)--(2.5,4) to[out=30, in=190] (4.8,4.8);
      \draw[blue,thick, name path=g] (\k,\k) to[out=290, in=180] (2.5,1.3)--(2.5,0.7) to[out=330, in=180] (4.8,0.1);

 \draw[gray, dashed] (-6,1.3) -- (dR1) --  (1.3,-k1-1.8);
  \draw[gray, dashed] (dR1) --  (2.5,1.3);
  \draw[gray, dashed] (-3.5,0.7) -- (dR3) --  (0.7,-k1-1.1);
    \draw[gray, dashed](dR3) --  (2.5,0.7);
  \draw[gray, dashed] (-6,3) -- (dS1) --  (3,-k1-1.8);
  \draw[gray, dashed] (-3.5,4) -- (dS3) --  (4,-k1-1.1);
  \draw[gray, dashed] (R1) --  (2.5,-k1-1.1);

  \node[black!30!green,below] at (0.7,-k1-1.1) {$R(\overline{u})$};
   \node[black!30!green,below] at (4,-k1-1.1) {$S(\overline{u})$};
    \node[black, below] at (2.5,-k1-1.1) {$G(\hat{u})$};
\node[red,below] at (1.3,-k1-1.8) {$R(\underline{u})$};
\node[red,below] at (3,-k1-1.8) {$S(\underline{u})$};

  \draw[black!30!green, very thick, densely dotted] (0.7,{b(0.7)}) -- (2.5,{a(2.5)}) -- (4,{b(4)});
    \draw[red, very thick, densely dotted] (1.3,{b(1.3)}) -- (2.5,{a(2.5)}) -- (3,{b(3)});

   \path [name path=lineA](Xop,0) -- (Xop,5);
	\path [gray, very thin, name intersections={of=lineA and s0}] (Xop,0) -- (intersection-1);
	\coordinate (temp1) at (intersection-1);
	\gettikzxy{(temp1)}{\n}{\m};
	\draw[black,thick, dashed] (temp1) -- (\m,\m)-- (\m,{b(\m)});
	\node (opG)[scale=0.3, shape=circle, fill, black] at (2.5,\m) {};
	\node (opGd)[scale=0.3, shape=circle, fill, black] at (\m,\m) {};
	 \node (atmG)[scale=0.3, shape=circle, fill, black] at (\m,{b(\m)}){};
	
	 \path [name path=lineA](Xop,3) -- (Xop,-2);
	\path [gray, very thin, name intersections={of=lineA and r0}] (Xop,3) -- (intersection-1);
	\coordinate (temp3) at (intersection-1);
	\gettikzxy{(temp3)}{\z}{\u};
	\draw[black,thick, dashed] (temp3) -- (\u,\u);
	\draw[black,thick, dashed] (\u,\u)-- (2.5,\u);
	\node (opF)[scale=0.3, shape=circle, fill, black] at (2.5,\u) {};
	\node (opFd)[scale=0.3, shape=circle, fill, black] at (\u,\u) {};
	\path [name path=lineA](\u,\u) -- (\u,-3.8);
	\draw[black, thick, dashed, name intersections={of=lineA and b}] (\u,\u) -- (intersection-1);
	 \node (atmF)[scale=0.3, shape=circle, fill, black] at (intersection-1) {};
	\draw[black,thick] (\m,{b(\m)}) -- (2.5,{a(2.5)}) -- (intersection-1);

\draw[black,dotted] (-6,-0.1) -- (-6,3);
\draw[black, dotted] (Xop,-0.1) -- (Xop,\m);
\draw[black,dotted] (-3.5,-0.1) -- (-3.5,4);
\node[black,below] at (-6,-0.1) {$\underline{u}$};
\node[black,below] at (Xop,-0.1) {$u^*$};
\node[black,below] at (-3.5,-0.1) {$\overline{u}$};
\node[black,below] at (-7,-0.1) {$0$};
\node[black,below] at (-2,-0.1) {$1$};

\node[red] at (-3.4,4.6) {$S$};
\node[blue] at (-3.1,3.7) {$G$};
\node[red] at (-3.2,0.4) {$R$};

\end{tikzpicture}
\caption{Finding the optimal hedge for general measures. $\mu$ has an atom of size $\overline{u}-\underline{u}$. Moreover, the piecewise linear curve joining $(R(\underline{u}),K_2-R(\underline{u}))$, $(G(\hat{u}),K_1-G(\hat{u}))$ and $(S(\underline{u}),0)$ is concave (where $\hat{u}$ is any element of $(\underline{u},\overline{u}]$), whereas the piecewise linear curve joining $(R(\overline{u}),K_2-R(\overline{u}))$, $(G(\hat{u}),K_1-G(\hat{u}))$ and $(S(\overline{u}),0)$ is convex. There exists $u^*\in(\underline{u},\overline{u}]$ such that $(R(u^*),K_2-R(u^*))$, $(G(\hat{u}),K_1-G(\hat{u}))$ and $(S(u^*),0)$ all lie on a straight line. The figure describes the optimal coupling (via $(U,V)$ and \eqref{eq:YUVdef}) and the optimal exercise strategy for the American put is to exercise at time-1 if $U\leq u^*$.}
\label{fig:strategy}
\end{figure}

\appendix

\section{Proofs}
\label{app:proofs}
\begin{proof}[Proof of Lemma~\ref{lem:boundS}]
We begin our study of the upper bound on $S_n$ by considering the case of a single starting measure $\mu$ and fixed target law $\nu$. First we assume that $\mu$ and $\nu$ are regular (no atoms and no intervals within the support with no mass), before extending to the general case. Then we consider what happens when we consider $\mu_n \uparrow_{cx} \mu$.



Suppose $\mu$ and $\nu$ have no atoms and no intervals within the support with no mass. Then $G_\mu$ is continuous and strictly increasing.
Fix $u \in (0,1)$ and let $\ell_1 \equiv \ell^u_1$ be the tangent to $P_\mu$ with slope $u$. See Figure~\ref{fig:Sbound}.
By construction this tangent meets $P_\mu$ at $G= G_{\mu}(u)$. Let $H= H(u)$ be the point where the tangent crosses the $x$-axis.
Let $\ell_2 \equiv \ell^u_2$ be the tangent to $P_\nu$ with slope greater than $u$ which passes through $(G, P_{\mu}(G))$; this tangent meets $P_\nu$ at the $x$-coordinate $J=J(u)=J_{\mu,\nu}(u)$.

We now show that $S(u) \leq J$.

\begin{figure}[H]

\centering
\begin{tikzpicture}

\begin{axis}[%
width=6.028in,
height=4.754in,
at={(1.011in,0.642in)},
scale only axis,
xmin=-38,
xmax=25,
ymin=-5,
ymax=20,
axis line style={draw=none},
ticks=none
]
\addplot [color=black, line width=1.0pt, forget plot]
  table[row sep=crcr]{%
-30	-0\\
-29.0823692733876	-0\\
-28	-0\\
-26.9530304567765	-0\\
-26	-0\\
-25.0188349687805	-0\\
-24	-0\\
-22.8966059152219	-0\\
-22	-0\\
-20.885479318973	-0\\
-20	-0\\
-19.035272075625	-0\\
-18	-0\\
-17.0386875053135	-0\\
-16	-0\\
-14.8844881899485	-0\\
-14	-0\\
-12.927495427025	-0\\
-12	-0\\
-10.9474394441761	-0\\
-10	-0\\
-9.00927578659704	-0\\
-8	-0\\
-6.99973779744308	-0\\
-6	-0\\
-4.96025116014284	-0\\
-4	-0\\
-2.91985035657147	-0\\
-2	-0\\
-1.01716014747721	-0\\
-0.508580073738607	-0\\
-0.254290036869304	-0\\
-0.127145018434652	-0\\
-0.0635725092173259	-0\\
-0.0317862546086629	-0\\
-0.0158931273043315	-0\\
-0.00794656365216574	-0\\
-0.00397328182608287	-0\\
-0.00198664091304143	-0\\
0	0\\
0.00196594828269691	0.00196594828269691\\
0.00393189656539382	0.00393189656539382\\
0.00786379313078763	0.00786379313078763\\
0.0157275862615753	0.0157275862615753\\
0.0314551725231505	0.0314551725231505\\
0.0629103450463011	0.0629103450463011\\
0.125820690092602	0.125820690092602\\
0.251641380185204	0.251641380185204\\
0.503282760370408	0.503282760370408\\
1.00656552074082	1.00656552074082\\
2	2\\
3.0007727756546	3.0007727756546\\
4	4\\
5.07294183468093	5.07294183468093\\
6	6\\
6.95283367288173	6.95283367288173\\
8	8\\
9.04935360608996	9.04935360608996\\
10	10\\
10.7548107460551	10.7548107460551\\
12	12\\
12.9814230456544	12.9814230456544\\
14	14\\
};
\addplot [color=black, line width=1.0pt, forget plot]
  table[row sep=crcr]{%
-30	0.00382154317047698\\
-29.0823692733876	0.00526505611314772\\
-28	0.00761086657781988\\
-26.9530304567765	0.0107650520115946\\
-26	0.0146388037201816\\
-25.0188349687805	0.0199247233647669\\
-24	0.0272044407581622\\
-22.8966059152219	0.0377399183401709\\
-22	0.0488700831654304\\
-20.885479318973	0.0667580480964812\\
-20	0.0849070261683935\\
-19.035272075625	0.1095346911613\\
-18	0.142755838976277\\
-17.0386875053135	0.181160803154195\\
-16	0.232419679601628\\
-14.8844881899485	0.300871866145184\\
-14	0.366681427084654\\
-12.927495427025	0.462343283809858\\
-12	0.56102450717163\\
-10.9474394441761	0.693355933545738\\
-10	0.833154705876863\\
-9.00927578659704	1.00260521280182\\
-8	1.20207233894765\\
-6.99973779744308	1.42885721266778\\
-6	1.68672732241755\\
-4.96025116014284	1.9902574142651\\
-4	2.30438836947453\\
-2.91985035657147	2.6983592831337\\
-2	3.06894635863276\\
-1.01716014747721	3.50146253407814\\
0	3.98942280401433\\
1.00656552074082	4.51289841844358\\
2	5.06894635863279\\
3.0007727756546	5.66808993946372\\
4	6.3043883694746\\
5.07294183468093	7.02849565875718\\
6	7.6867273224177\\
6.95283367288173	8.39307474893129\\
8	9.20207233894781\\
9.04935360608996	10.0446133042759\\
10	10.8331547058769\\
10.7548107460551	11.4749304393509\\
12	12.5610245071715\\
12.9814230456544	13.4385038840761\\
14	14.3666814270856\\
};
\addplot [color=black, line width=1.0pt, forget plot]
  table[row sep=crcr]{%
-30	0.292835548084406\\
-29.0823692733876	0.333269567149901\\
-28	0.38708680953244\\
-26.9530304567765	0.446090360946247\\
-26	0.50632471165126\\
-25.0188349687805	0.575428958367448\\
-24	0.655465115457608\\
-22.8966059152219	0.75249590557522\\
-22	0.839918096214468\\
-20.885479318973	0.960172818975746\\
-20	1.06552142738833\\
-19.035272075625	1.19087189006987\\
-18	1.33844423225056\\
-17.0386875053135	1.48833638002843\\
-16	1.66506137811206\\
-14.8844881899485	1.8729438649162\\
-14	2.05180134823564\\
-12.927495427025	2.28616367293635\\
-12	2.50497257645734\\
-10.9474394441761	2.77219530661013\\
-10	3.03057536342838\\
-9.00927578659704	3.31948441773478\\
-8	3.63410828667839\\
-6.99973779744308	3.96672285082317\\
-6	4.32037926204283\\
-4.96025116014284	4.71118893636708\\
-4	5.09333193373226\\
-2.91985035657147	5.54790993046339\\
-2	5.95589775986925\\
-1.01716014747721	6.41321460416528\\
0	6.90988298942701\\
1.00656552074082	7.42483065838808\\
2	7.95589775986943\\
3.0007727756546	8.51371241564439\\
4	9.09333193373246\\
5.07294183468093	9.74062669651958\\
6	10.3203792620426\\
6.95283367288173	10.9356707423562\\
8	11.6341082866767\\
9.04935360608996	12.3567715442201\\
10	13.0305753634232\\
10.7548107460551	13.5781485589542\\
12	14.5049725764452\\
12.9814230456544	15.2553315339212\\
14	16.0518013481812\\
};
\addplot [color=black!30!green, line width=1.0pt, dashed, forget plot]
  table[row sep=crcr]{%
-9.29415848085642	4.44089209850063e-16\\
-8.8083531269334	0.204397885765609\\
-8.23533309536295	0.445490500345778\\
-7.68105413026117	0.678697993174494\\
-7.17650770986947	0.890981000691556\\
-6.65706648866265	1.10953085103142\\
-6.117682324376	1.33647150103733\\
-5.5335314907928	1.58224729249052\\
-5.05885693888253	1.78196200138311\\
-4.46881554401808	2.03021618930134\\
-4.00003155338905	2.22745250172889\\
-3.48929234517774	2.44234106459256\\
-2.94120616789558	2.67294300207467\\
-2.43227513151251	2.88707079419794\\
-1.88238078240211	3.11843350242044\\
-1.29181467125193	3.36690845962119\\
-0.823555396908631	3.56392400276622\\
-0.255757862946741	3.80281930218511\\
0.235269988584843	4.009414503112\\
0.792508906722596	4.24386736744112\\
1.29409537407832	4.45490500345778\\
1.81859734766537	4.67558411622456\\
2.35292075957179	4.90039550380355\\
2.88247226568023	5.12319915835058\\
3.41174614506526	5.34588600414933\\
3.96220237815434	5.57748511960029\\
4.47057153055874	5.79137650449511\\
5.04241646185566	6.03197470704475\\
5.52939691605221	6.23686700484089\\
6.0497248089151	6.45578991367096\\
6.58822230154568	6.68235750518667\\
7.1211108643071	6.90656519391948\\
7.64704768703916	7.12784800553244\\
8.17686949702609	7.35076538781184\\
8.70587307253263	7.57333850587822\\
9.27390209839169	7.81233120326518\\
9.76469845802611	8.018829006224\\
10.2691406985262	8.23106818106319\\
10.8235238435196	8.46431950656978\\
11.3790649617632	8.69805803807811\\
11.8823492290131	8.90981000691555\\
12.2819556185963	9.07794051537878\\
12.9411746145065	9.35530050726133\\
13.4607524318301	9.57390782909105\\
14	9.80079100760711\\
};
\addplot [color=blue, line width=1.0pt, dashed, forget plot]
  table[row sep=crcr]{%
-5.68380747191276	0\\
-5.23158638086333	0.368369276102761\\
-4.69817985955309	0.802870404008083\\
-4.18221881400275	1.22316083408409\\
-3.71255224719342	1.60574080801617\\
-3.2290205736676	1.99961499052305\\
-2.72692463483375	2.40861121202425\\
-2.18315679619793	2.85155243933722\\
-1.74129702247407	3.21148161603234\\
-1.192045843591	3.65888945075811\\
-0.755669410114404	4.01435202004042\\
-0.280238169775211	4.40162776924082\\
0.229958202245268	4.81722242404851\\
0.703706271679959	5.20312709954199\\
1.21558581460494	5.62009282805659\\
1.76532543555499	6.06789853686252\\
2.20121342696461	6.42296323206467\\
2.7297584877177	6.85350432196716\\
3.18684103932428	7.22583363607276\\
3.70555741307464	7.64836849542143\\
4.17246865168395	8.02870404008084\\
4.66071122216559	8.42641561481855\\
5.15809626404362	8.83157444408893\\
5.65103928726352	9.23311490342937\\
6.14372387640329	9.63444484809701\\
6.65612645964437	10.0518366336585\\
7.12935148876296	10.4373152521051\\
7.66166414578477	10.8709253424094\\
8.11497910112263	11.2401856561132\\
8.59933614970962	11.6347321708483\\
9.10060671348231	12.0430560601213\\
9.59665609892797	12.4471268932702\\
10.086234325842	12.8459264641293\\
10.5794289665335	13.2476718854844\\
11.0718619382016	13.6487968681374\\
11.6006224875603	14.0795134902812\\
12.0574895505613	14.4516672721455\\
12.5270591395505	14.834168250095\\
13.043117162921	15.2545376761536\\
13.5602531075667	15.675785152988\\
14.0287447752807	16.0574080801617\\
14.4007259319895	16.3604156844791\\
15.0143723876403	16.8602784841698\\
15.4980312142419	17.2542562427535\\
16	17.6631488881779\\
};
\addplot [color=blue, line width=1.0pt, dashed, forget plot]
  table[row sep=crcr]{%
-24.3867716662262	-0.33403600634423\\
-24.0345939263577	-0.310507997194423\\
-23.6191911359432	-0.282756099367565\\
-23.2173744173544	-0.255911848975614\\
-22.8516106056602	-0.2314761923909\\
-22.4750490181809	-0.206319166626055\\
-22.0840300753772	-0.180196285414236\\
-21.6605581670246	-0.151905312401224\\
-21.3164495450942	-0.128916378437571\\
-20.8887073574171	-0.100340120014254\\
-20.5488690148112	-0.0776364714609067\\
-20.1786158289259	-0.0529008923510377\\
-19.7812884845282	-0.0263565644842422\\
-19.4123461073086	-0.00170855683272753\\
-19.0137079542452	0.0249233424924223\\
-18.585585380897	0.053525013417829\\
-18.2461274239621	0.076203249469087\\
-17.8345106095346	0.10370221683619\\
-17.4785468936791	0.127483156445752\\
-17.074584398882	0.15447076014073\\
-16.7109663633961	0.178763063422416\\
-16.3307360548521	0.204165186173833\\
-15.9433858331131	0.230042970399081\\
-15.5594949371828	0.255689646748777\\
-15.1758053028301	0.281322877375745\\
-14.7767598199008	0.30798198926923\\
-14.4082247725471	0.33260278435241\\
-13.9936738545032	0.360297770970356\\
-13.6406442422641	0.383882691329074\\
-13.2634398746727	0.409082659434238\\
-12.8730637119811	0.435162598305739\\
-12.4867536638937	0.460970891440492\\
-12.1054831816981	0.486442505282403\\
-11.7213963327832	0.512102272702577\\
-11.3379026514151	0.537722412259068\\
-10.9261180202014	0.565232590995973\\
-10.570322121132	0.589002319235732\\
-10.204633833181	0.613432930290537\\
-9.80274159084903	0.640282226212397\\
-9.40000989214057	0.667187603865357\\
-9.03516106056602	0.691562133189062\\
-8.74547204420588	0.710915445610408\\
-8.26758053028301	0.742842040165726\\
-7.89091991937534	0.768005681408682\\
-7.5	0.794121947142391\\
};
\addplot [color=blue, line width=1.0pt,dashed, forget plot]
  table[row sep=crcr]{%
-29.5763548608086	-0.966190576930408\\
-29.0325261231193	-0.861102136566547\\
-28.3910660034991	-0.737147591955802\\
-27.7705853367365	-0.617247077277096\\
-27.2057771461896	-0.508104606981196\\
-26.6242951568466	-0.395740123229593\\
-26.0204882888801	-0.279061622006589\\
-25.3665679319258	-0.152699284616137\\
-24.8351994315707	-0.0500186370319828\\
-24.1746849593395	0.0776179347671981\\
-23.6499105742612	0.179024347942623\\
-23.0781699447127	0.289506429686221\\
-22.4646217169517	0.40806733291723\\
-21.8949052227795	0.518158274555422\\
-21.2793328596422	0.637110317891836\\
-20.6182310003166	0.764860395266151\\
-20.0940440023327	0.866153302866443\\
-19.4584301424524	0.988978127262999\\
-18.9087551450233	1.09519628784105\\
-18.2849609957925	1.21573709362727\\
-17.7234662877138	1.32423927281566\\
-17.1363191023072	1.43769848837787\\
-16.5381774304043	1.55328225779026\\
-15.945377608865	1.66783377810572\\
-15.3528885730948	1.78232524276487\\
-14.7366872159532	1.90139883171725\\
-14.1675997157853	2.01136822773948\\
-13.527455047494	2.13506857701454\\
-12.9823108584758	2.24041121271408\\
-12.3998362956184	2.35296749950099\\
-11.7970220011664	2.46945419768869\\
-11.2004865532234	2.58472758341019\\
-10.6117331438569	2.69849718266329\\
-10.0186307340158	2.81310717457192\\
-9.4264442865474	2.9275401676379\\
-8.79057128595315	3.05041506789763\\
-8.24115542923792	3.15658315261251\\
-7.67646386156993	3.26570308692308\\
-7.05586657192844	3.38562613758711\\
-6.43397300359047	3.50579967870347\\
-5.87057771461896	3.61466912256172\\
-5.42324333128068	3.7011111757454\\
-4.68528885730948	3.84371210753633\\
-4.10365395714906	3.9561061394861\\
-3.5	4.07275509251093\\
};
\addplot [color=red, line width=1.0pt, dashed, forget plot]
  table[row sep=crcr]{%
-7.5	3.32102726161298\\
-7.00990177101383	3.55479423958886\\
-6.43181818181818	3.83052846116169\\
-5.87264126668744	4.09724458024335\\
-5.36363636363636	4.34002966071039\\
-4.83960504014415	4.58998204089117\\
-4.29545454545454	4.8495308602591\\
-3.70614179562987	5.1306211651438\\
-3.22727272727273	5.3590320598078\\
-2.63201736354238	5.64295687176036\\
-2.15909090909091	5.86853325935651\\
-1.6438384949361	6.11429827671009\\
-1.09090909090909	6.37803445890521\\
-0.577480826701524	6.62292939349718\\
-0.0227272727272725	6.88753565845392\\
0.573057444004799	7.17171296111992\\
1.04545454545454	7.39703685800263\\
1.61826948783893	7.67025804122875\\
2.11363636363636	7.90653805755133\\
2.6757993877696	8.17467849044631\\
3.18181818181818	8.41603925710004\\
3.71095497761295	8.66842684467542\\
4.25	8.92554045664874\\
4.7842309490929	9.18035785268173\\
5.31818181818182	9.43504165619745\\
5.87350222128735	9.69991829676575\\
6.38636363636364	9.94454285574616\\
6.9632617413766	10.2197116252556\\
7.45454545454546	10.4540440552949\\
7.97947128487013	10.7044230972072\\
8.52272727272727	10.9635452548436\\
9.0603247667593	11.2199684249645\\
9.59090909090909	11.4730464543923\\
10.1254127324519	11.7279939192281\\
10.6590909090909	11.982547653941\\
11.2321393889773	12.2558802298489\\
11.7272727272727	12.4920488534897\\
12.2361725298346	12.7347838031415\\
12.7954545454545	13.0015500530384\\
13.355904766889	13.2688735135652\\
13.8636363636364	13.5110512525871\\
14.2667739211885	13.7033397428608\\
14.9318181818182	14.0205524521358\\
15.4559873084745	14.2705705616486\\
16	14.5300536516845\\
};
\node[right, align=left, font=\color{blue}]
at (axis cs:-24.5,-1) {$\ell^\gamma_3$};
\node[right, align=left]
at (axis cs:-16.5,-1.5) {$r(\gamma)$};
\node[right, align=left]
at (axis cs:-3.5,-4) {$G(u)$};
\node[right, align=left, font=\color{red}]
at (axis cs:11,-1.5) {$J(u)$};
\node[right, align=left]
at (axis cs:-8.4,-1.5) {$\gamma$};
\node[right, align=left, font=\color{red}]
at (axis cs:-1.8,-1.5) {$s$};
\node[right, align=left, font=\color{blue}]
at (axis cs:16,18.052) {$\ell_2^u$};
\node[right, align=left]
at (axis cs:13,14.867) {$P_\mu$};
\node[right, align=left, font=\color{red}]
at (axis cs:16,14.867) {$\ell_5$};
\node[right, align=left, font=\color{blue}]
at (axis cs:-31,-1.5) {$\ell_4^\gamma$};
\node[right, align=left, font=\color{black!30!green}]
at (axis cs:14,9.828) {$\ell^u_1$};
\node[right, align=left]
at (axis cs:-31,0.8) {$P_\nu$};
\node[right, align=left]
at (axis cs:-10.494,-1.5) {$H$};
\addplot [color=black, dotted, forget plot]
  table[row sep=crcr]{%
-7.5	-1\\
-7.5	3.3\\
};
\addplot [color=black, dotted, forget plot]
  table[row sep=crcr]{%
-15	-1\\
-15	1.893067937626047\\
};
\addplot [color=black, dotted, forget plot]
  table[row sep=crcr]{%
-2	-3.4\\
-2	3.06894635863276\\
};
\addplot [color=black, dotted, forget plot]
  table[row sep=crcr]{%
13	-1\\
13	18\\
};
\addplot [color=black, dotted, forget plot]
  table[row sep=crcr]{%
-1	-1\\
-1	6.42139626312213\\
};
\addplot [color=black, dotted, forget plot]
  table[row sep=crcr]{%
-9.29415848085642	0\\
-9.29415848085642	-1\\
};
\end{axis}
\end{tikzpicture}%

\caption{Construction of function $J$ that bounds the upper function $S$ on $(0,1)$.}
\label{fig:Sbound}
\end{figure}

Choose $\gamma \in [H,G)$. Let $\ell_3^\gamma$ be the tangent to $P_\mu$ which passes through $(\gamma, \ell_1(\gamma))$ and has slope less than $u$.
Suppose this tangent meets $P_\mu$ at $r=r(\gamma)$; the slope of the tangent is $P_{\mu}'(r)$. Let $\ell_4^{\gamma}$ be the tangent to
$P_\nu$ at $r$. Finally, let $\ell_5^\gamma$ be the line passing through $(\gamma, \ell_4^{r}(\gamma))$ with slope $u+P_\nu'(r) - P_\mu'(r)$.

If there exists $\gamma$ such that $\ell_5^\gamma$ is a tangent to $P_\nu$ (meeting $P_\nu$ at $s$ say), then $(r,G,s)$ satisfy
\begin{equation} \int_r^G w^i \mu(dw) = \int_r^s w^i \nu(dw) \hspace{10mm} i=0,1  \label{eq:gammameanmass} \end{equation}
(and moreover $\gamma =  \int_r^G w \mu(dw)/ \int_r^G  \mu(dw) = \int_r^s w \nu(dw)/\int_r^s  \nu(dw)$ is the barycentre of the measures $\mu |_{(r,G)}$ and $\nu |_{(r,s)}$).

For each $u$ there may be multiple $\gamma$ which lead to a triple $(r,G,s)$ which satisfies \eqref{eq:gammameanmass}. We show that in each case $s \leq J$. It follows that $S(u) \leq J$.

Suppose $\ell_4^\gamma(\gamma) \leq \ell_3^\gamma(\gamma) = \ell_1(\gamma)$. Then necessarily $P_\nu'(r) < P_{\mu}'(r)$ and $\ell_5^\gamma$ lies below $\ell_1$ to the right of $\gamma$; in particular $\ell^\gamma_5$ stays below $P_\mu$ to the right of $\gamma$ and cannot be a tangent to $P_\nu$. Hence if $(r,G,s)$ satisfies \eqref{eq:gammameanmass} we must have $\ell_4^\gamma(\gamma) > \ell_1(\gamma)$. Then, if $\ell^\gamma_5$ is a tangent to $P_\nu$ we must have that the point of tangency is below $J$.

In the above we used the regularity assumptions on $\mu$ and $\nu$ to conclude that there was a unique tangent to $P_\cdot \in \{P_\mu , P_\nu \}$ at a given point, and that there was a unique point at which $P_{\cdot}$ had a given slope. If $\mu$ or $\nu$ is not regular then, for fixed u, there may be multiple quintiles $G$, multiple points $r$ and multiple tangents to $P_\nu$ at $r$. The point is that although there are multiple versions of the construction in this case each candidate triple $(r,G,s)$ satisfying \eqref{eq:gammameanmass} has $s \leq J$ where $J$ is defined using an arbitrary point $G \in [G_\mu(u), G_\mu(u+)]$. We define $J_- = J_-(u)$ to be the smallest $x$-coordinate at which the tangent to $P_\nu$ with slope greater than $u$ passing through $(G(u),P_\mu(G(u)))$ meets $P_\nu$ and $J_+ = J_+(u)$ to be the largest $x$-coordinate at which the tangent to $P_\nu$ with slope greater than $u$ passing through $(G(u+),P_\mu(G(u)+))$ meets $P_\nu$. We have $S(u) \leq J_-(u) \leq J_+(u)$.

Finally, we want to show that if we approximate $\mu$ by $\mu_n$ (with $\mu_n \uparrow_{cx} \mu$) then the bound $\limsup S_n \leq J_+$ remains valid, where $J_+$ is constructed from $\mu$ and $\nu$.

Define $K(k) = \overline{\argsup}_\kappa \frac{P_\nu(\kappa) - P_\mu(k)}{\kappa - k}$. The notation $\overline{\argsup}$ is used to indicate that where there are multiple elements in the $\argsup$ we choose the largest one. Then $K$ is increasing and right continuous in $k$. Note that $J_+(u) = K(G(u+))$.
In a similar fashion we can define $K_n$ and $J_n$ using $P_{\mu_n}$ in place of $P_\mu$. (The target law is assumed fixed throughout.) Since $P_{\mu_n}(k) \uparrow P_\mu(k)$ and $K$ is right-continuous we have $K_n(k) \downarrow K(k)$. Then, for $\epsilon>0$,
\[ \limsup_n J_n(u) = \limsup_n K_n(G_n(u+)) \leq \limsup_n K_n (G(u+)+\epsilon) \leq K(G(u+)+\epsilon) .\]
Since $\epsilon$ is arbitrary and $K$ is right continuous, $\limsup S_n(u) \leq \limsup_n J_n(u) \leq J_+(u)$.

\end{proof}

\begin{proof}[Proof of Lemma~\ref{lem:boundR}]
As for the proof of Lemma~\ref{lem:boundS} we begin by considering a single initial law $\mu$, and supposing that $\mu$ and $\nu$ are regular.

Fix $u \in (0,1)$ and let $\ell_1$ be the tangent to $P_\mu$ with slope $u$. Let $H=H(u)$ be the point where this tangent crosses the $x$-axis. Suppose that $\ell_1$ is not a tangent to $P_\nu$. Then $\ell_1$ must lie strictly below $P_\nu$. There exists $\epsilon=\epsilon(u)>0$ such that the line passing through $(H,\epsilon)$ with slope $u+\epsilon$ lies below $P_\nu$. Now choose $j=j(u)$ such that the tangents to $P_{\mu}$ and $P_\nu$ at $j$ both have slope less than $\epsilon$ and both cross the line $y=x$ below $\epsilon$. Then $R(u) \geq j$.

To see this let $\gamma$ be the $x$-coordinate of the point where the tangent to $P_\mu$ at $j$ crosses $\ell_1$. Then if $\ell_4$ is the tangent to $P_\nu$ at $j$ then $\ell_4(\gamma)<\epsilon$; if $\ell_5$ is the line passing through $(\gamma, \ell_4(\gamma))$ with slope $u + P_{\nu}'(j) - P_{\mu}'(j)<u+\epsilon$, then by our defining assumption on $\epsilon$, $\ell_5$ lies below $P_\nu$. Hence $R(u)>j$.

We can extend the result to irregular measures, and to $\liminf R_n(u)$ by similar techniques as for $S$. The only extra issue that arises is our assumption that $\ell^1$ is not a tangent to $P_\nu$. But, if for each $n$, $\ell^1$ is a tangent to $P_\nu$, then the same is certainly true in the limit. Then there must exist $x$ such that $\ell_1(x) = P_\mu(x)=P_\nu(x)$ and then $S(u) \leq x \leq G(u+)$. This case is excluded by hypothesis.
\end{proof}

\begin{thebibliography}{99}


\bibitem{AzemaYor:79}
Az{\'e}ma J., Yor M.:
\newblock Une solution simple au probleme de Skorokhod.
\newblock In {\em S{\'e}minaire de probabilit{\'e}s XIII}, pages 90--115.
  Springer, 1979.

\bibitem{BayraktarHuangZhou:15}
Bayraktar E., Huang Y-J., Zhou Z.:
\newblock On hedging American options under model uncertainty.
\newblock {\em SIAM Journal on Financial Mathematics}, 6(1):425--447, 2015.


\bibitem{beiglbockGeom:17}
Beiglb{\"o}ck, M., Cox A.M.G., Huesmann, M.:
\newblock The geometry of multi-marginal Skorokhod Embedding.
\newblock {\em arXiv preprint arXiv:1705.09505}, 2017.

\bibitem{BeiglbockJuillet:16}
Beiglb{\"o}ck M., Juillet N.:
\newblock On a problem of optimal transport under marginal martingale
  constraints.
\newblock {\em The Annals of Probability}, 44(1):42--106, 2016.

\bibitem{BeiglbockHenrylaborderePenkner:13}
Beiglb{\"o}ck M., Henry-Labord{\`e}re P., Penkner F.:
\newblock Model-independent bounds for option prices\textemdash mass transport approach.
\newblock {\em Finance and Stochastics}, 17(3):477--501, 2013.

\bibitem{BeiglbockHenryLabordereTouzi:17}
Beiglb{\"o}ck M., Henry-Labord{\`e}re P., Touzi N.:
\newblock Monotone martingale transport plans and {S}korokhod embedding.
\newblock {\em Stochastic Processes and Applications}, 127(9):3005--3013, 2017.

\bibitem{Billingsley:2013}
Billingsley P.:
\newblock {\em Convergence of probability measures}.
\newblock John Wiley \& Sons, 2013.

\bibitem{BreedenLitzenberger:78}
Breeden D.T., Litzenberger R.H.:
\newblock Prices of state-contingent claims implicit in option prices.
\newblock {\em Journal of Business}, pages 621--651, 1978.

\bibitem{Brenier:87}
Brenier Y.:
\newblock D{\'{e}}composition polaire et r{\'{e}}arrangement monotone des champs de vecteurs.
\newblock {\em C. R. Acad. Sci. Paris Sci. Paris S{\'{e}}rie I Math.}, 305(19): 805-808, 1987.

\bibitem{BrownHobsonRogers:01}
Brown H., Hobson D.G., Rogers L.C.G.:
\newblock Robust hedging of barrier options.
\newblock {\em Mathematical Finance}, 11(3):285--314, 2001.

\bibitem{CarrLee:10}
Carr P., Lee R.:
\newblock Hedging variance options on continuous semi-martingales.
\newblock {\em Finance and Stochastics}, 14(2):179--207, 2010.

\bibitem{Chacon:77}
Chacon, R.V.:
\newblock Potential processes.
\newblock In {\em Trans. Amer. Math. Soc.}, 226:39--58, 1977.

\bibitem{Chacon:76}
Chacon, R.V., Walsh J.B.:
\newblock One-dimensional potential embedding.
\newblock In {\em Séminaire de Probabilités, X}, pp. 19-23. Lecture Notes in Math., Vol 511. Springer, Berlin, 1976.

\bibitem{CoxHoeggerl:16}
Cox A.M.G., Hoeggerl C.:
\newblock Model-independent no-arbitrage conditions on American put options.
\newblock {\em Mathematical Finance}, 26(2):431--458, 2016.

\bibitem{CoxObloj:11}
Cox A.M.G., Ob{\l}{\'o}j J.:
\newblock Robust pricing and hedging of double no-touch options.
\newblock {\em Finance and Stochastics}, 15(3):573--605, 2011.

\bibitem{CoxWang:13}
Cox A.M.G., Wang J.:
\newblock Optimal robust bounds for variance options.
\newblock {\em arXiv preprint arXiv:1308.4363}, 2013.



\bibitem{GalichonLabordereTouzi:14}
Galichon A., Henry-Labord{\`e}re P., Touzi N.:
\newblock A stochastic control approach to no-arbitrage bounds given marginals, with an application to lookback options.
\newblock {\em The Annals of Applied Probability}, 24:312--336, 2014.

\bibitem{Touzi:16}
Henry-Labord{\`e}re P., Touzi N.:
\newblock An explicit martingale version of the one-dimensional Brenier
  theorem.
\newblock {\em Finance and Stochastics}, 20(3):635--668, 2016.



\bibitem{Herrmann:17}
Herrmann S., Stebegg F.:
\newblock Robust pricing and hedging around the globe.
\newblock {\em arXiv preprint arXiv:1707.08545}, 2017.

\bibitem{Hobson:98}
Hobson D.G.:
\newblock Robust hedging of the lookback option.
\newblock {\em Finance and Stochastics}, 2(4):329--347, 1998.

\bibitem{Hobson:survey}
Hobson D.G.:
\newblock The Skorokhod embedding problem and model-independent bounds for
  option prices.
\newblock In {\em Paris-Princeton Lectures on Mathematical Finance 2010}, pages
  267--318. Springer, 2011.

\bibitem{HobsonKlimmek:15}
Hobson D.G., Klimmek M.:
\newblock Robust price bounds for the forward starting straddle.
\newblock {\em Finance and Stochastics}, 19(1):189--214, 2015.

\bibitem{HobsonNeuberger:12}
Hobson D.G., Neuberger A.:
\newblock Robust bounds for forward start options.
\newblock {\em Mathematical Finance}, 22(1):31--56, 2012.



\bibitem{HobsonNeuberger:16}
Hobson D.G., Neuberger A.:
\newblock On the value of being American.
\newblock {\em Finance and Stochastics}, 21(1) 285--329, 2017.

\bibitem{HobsonNeuberger:17}
Hobson D.G., Neuberger A.:
\newblock More on hedging American options under model uncertainty.
\newblock {\em arXiv preprint arXiv:1604.02274}, 2016.

\bibitem{HobsonNorgilas:17}
Hobson D.G., Norgilas D.:
\newblock Robust bounds for the American Put.
\newblock {\em arXiv preprint arXiv:1711.06466}, 2017.

\bibitem{Juillet:16}
Juillet N.:
\newblock Stability of the shadow projection and the left-curtain coupling.
\newblock {\em Annales de l'Institut Henri Poincaré, Probabilités et Statistiques}, 52(4) 1823--1843, 2016.







\bibitem{Neuberger:07}
Neuberger A.:
\newblock Bounds on the American option.
\newblock Preprint SSRN:966333, 2007.

\bibitem{NutzStebegg:18}
Nutz M., Stebegg F.:
\newblock Canonical supermartingale couplings.
\newblock {\em The Annals of Probability}, 46(6):3351--3398, 2018.


\bibitem{NutzStebeggTan:17}
Nutz M., Stebegg F., Tan X.:
\newblock Multiperiod martingale transport.
\newblock {\em arXiv preprint arXiv:1703.10588}, 2017.


\bibitem{Skorokhod:65}
Skorokhod A.V.:
\newblock {\em Studies in the theory of random processes}.
\newblock Courier Dover Publications, 1965.

\bibitem{Strassen:65}
Strassen V.:
\newblock The existence of probability measures with given marginals.
\newblock {\em The Annals of Mathematical Statistics}, 36:423-439, 1965.

\end{thebibliography}
\end{document}